\numberwithin{equation}{section}
\newtheorem{thm}{Theorem}[section]
\newtheorem{prop}[thm]{Proposition}
\newtheorem{rem}[thm]{Remark}
\newtheorem{lem}[thm]{Lemma}
\newtheorem{assum}[thm]{Assumption}
\DeclareMathOperator*{\esssup}{ess\,sup}
\def\supp{{\rm supp}}
\begin{document}

\title{Volume growth, big jump, and essential spectrum \\
for regular Dirichlet forms\thanks{This work was 
supported by JSPS KAKENHI Grant Numbers JP22K18675, JP23K25773.}}
\author{Yuichi Shiozawa\thanks{Department of Mathematical Sciences, Faculty of Science and Engineering, Doshisha University, 
Kyotanabe, Kyoto, 610-0394, Japan; \texttt{yshiozaw@mail.doshisha.ac.jp}}}

\maketitle

\begin{abstract}
We establish an upper bound of the bottom of the essential spectrum 
for the generator associated with a regular Dirichlet form 
in terms of the rates of the volume growth/decay and big jump. 
Using this bound, we discuss how the bottom of the essential spectrum is affected 
by the volume growth and coefficient growth.  
\end{abstract}

\section{Introduction}
We are concerned with the spectral properties of the $L^2$-generator 
associated with a regular Dirichlet form. 
We first establish an upper bound of the bottom of the essential spectrum 
in terms of the volume growth/decay rate of the underlying space and 
the big jump rate for the Dirichlet form (Theorems \ref{thm:ess} and \ref{thm:ess-f}). 
We then apply this result to a class of non-local Dirichlet forms. 
These applications suggest the validity of our upper bound in terms of the positivity of the bottom of the essential spectrum.

It is well known that an $L^2$-Markovian semigroup is noncompact 
if and only if the essential spectrum of the associated generator is nonempty.  
Hence a noncompact Markovian semigroup is in fact an infinite dimensional object, 
and the domain of the corresponding Dirichlet form is large in this sense.  
In particular, we can regard the bottom of the essential spectrum 
as a characteristic quantity of noncompactness. 

On the other hand, 
the volume growth of the underlying measure is one of the benchmarks 
for the global properties of Markovian semigroups such as conservativeness and recurrence. 
In fact, there are several criteria in terms of the volume growth rate 
for the validity of these properties 
(see, e.g., \cite{F14-1, G99, GHM12, HKS20, MUW12, S15, St94}). 

In connection with noncompactness of Markovian semigroups, 
it is natural to relate the bottom of the essential spectrum 
to the volume growth rate. 
For the Laplace-Beltrami operator on a noncompact and complete Riemannian manifold,  
Brooks \cite{B81, B84} established a precise upper bound of the bottom of the spectrum 
in terms of the volume growth/decay rate (see also \cite{H01, K09} for refinements). 
This result was generalized to strongly local regular Dirichlet forms 
via the notion of intrinsic metrics (\cite{N98}),  and to weighted manifolds (\cite{R17}). 
Folz \cite{F14} further extended the result of Brooks \cite{B81} 
to Dirichlet forms on weighted graphs via the notion of adapted metrics.  
By using the notion of intrinsic metrics in the sense of  Frank-Lenz-Wingert \cite{FLW14}, 
Haeseler-Keller-Wojciechowski \cite{HKW13} also extended the result of Brooks \cite{B81} 
to regular Dirichlet forms without the killing term, with applications to weighted graphs. 
See \cite[Section 13.2 and Note (p.~524)]{KLW21} for the exposition and related references 
on these results.

The previous works mentioned above 
concern regular Dirichlet forms with strong locality or graph structure. 
Even though the formulation of Haeseler-Keller-Wojciechowski \cite{HKW13} covers 
general regular Dirichlet forms with infinite volume, 
the use of intrinsic metrics may force the small and big jump parts
to have finite moments of the common order (see \cite[Sections 14.3 and 14.4]{FLW14}). 
On the other hand, 
for a non-local Dirichlet form, 
we know necessary and sufficient conditions for compactness and transience of the Markovian semigroup  
in terms of the growth rates of the coefficients for the small and big jump parts (\cite{S23, SW23}).  
Therefore, it is natural to separate the small and big jumps  
for getting the upper bound of the bottom of the spectrum.  
Our objective in this paper is to pursue this approach so that 
we extend the previous works 
to regular Dirichlet forms having non-locality and no graph structure.   

We accomplish our objective by following the approach of \cite{S15, S16}. 
More precisely, we divide the non-local part of a Dirichlet form 
into the relatively small and big jump parts  
by introducing the adapted length and jump height function (see Assumption \ref{assum:length}). 
We can then apply the argument in the previous works (\cite{B81, B84, F14, HKW13, N98, R17}) to the relatively small jump part.  
We also extract the big jump rate from the relatively big jump part. 
Our results (Theorems \ref{thm:ess} and \ref{thm:ess-f}) and their proofs are presented in Section \ref{sect:thm-proof}.

In Section \ref{sect:volume}, we focus on the relation  
between the volume growth and the essential spectrum. 
As will be mentioned in Remark \ref{rem:hyperbolic}, 
our results provide a nontrivial upper bound of the bottom of the essential spectrum 
for a non-local Dirichlet form  
under the exponential volume growth condition with respect to the original distance. 

In Section \ref{sect:coeff}, we apply our results to 
two kinds of non-local Dirichlet forms with unbounded coefficients. 
We here take into consideration 
the coefficient growth rate into the adapted length and jump height function. 
When the state space is Euclidean space, 
we know necessary and sufficient conditions for noncompactness of associated Markovian semigroups 
in terms of the coefficient growth rate (\cite{CW14, HW25, M21, SW23, W19, WZ21}).  
Under the setting as \cite{M21, SW23, W19}, 
we further get a lower bound of the bottom of the essential spectrum 
by making use of Persson's formula (\cite{LS19}) 
and the Lyapunov method (\cite{S23, SW23}). 
Combining this bound with Theorem \ref{thm:ess}, we have a quantitative characterization of noncompactness 
in terms of the positivity of the bottom of the essential spectrum.

In Section \ref{sect:ou-type}, 
motivated by \cite{WW15}, 
we study the bottom of the spectrum for a non-local operator related to 
the Ornstein-Uhlenbeck type. 
We will see that, even if the volume is finite,  
the big jump rate may contribute to the positivity of the bottom of the essential spectrum. 
See Remark \ref{rem:ou-type} for details. 

\section{Preliminaries}
In this section, we set up notation and terminology on the Dirichlet form theory by following \cite{FOT11}. 
We also introduce assumptions which will be needed throughout this paper.

Let $E$ be a locally compact separable metric space, 
and $m$ a positive Radon measure on $E$ with full support. 
Let $C(E)$ be the totality of continuous functions on $E$, 
and $C_0(E)$ the totality of functions in $C(E)$ with compact support. 
Let $({\cal E},{\cal F})$ be a regular Dirichlet form on $L^2(E;m)$. 
By definition, ${\cal F}\cap C_0(E)$ is dense in $C_0(E)$ with respect to the uniform norm, 
and also dense in ${\cal F}$ with respect to the norm 
$\|u\|_{\cal E}=({\cal E}(u,u)+\|u\|_{L^2(E;m)}^2)^{1/2} \ (u\in {\cal F})$.

Let ${\cal B}(E)$ denote the totality of Borel measurable subsets of $E$. 
We impose the next assumption on the Beurling-Deny expression of $({\cal E},{\cal F})$ 
(\cite[Theorem 3.2.1]{FOT11}).
\begin{assum}\label{assum:BD}\rm  
For any $u\in {\cal F}\cap C_0(E)$, 
\[
{\cal E}(u,u)={\cal E}^{(c)}(u,u)+\iint_{E\times E\setminus {\rm diag}}(u(x)-u(y))^2\, J(x,{\rm d}y)m({\rm d}x),
\]
where
\begin{itemize}
\item ${\rm diag}$ is the diagonal set in $E\times E$, that is, ${\rm diag}=\{(x,y)\in E\times E \mid x=y\}$.
\item $({\cal E}^{(c)},{\cal F}\cap C_0(E))$ is a symmetric form with the strongly local property 
(see \cite[p.120]{FOT11} for definition).
\item $J(x,{\rm d}y)$ is a positive measurable kernel on $(E,{\cal B}(E))$ such that 
the measure $J({\rm d}x,{\rm d}y)=J(x,{\rm d}y)m({\rm d}x)$ is symmetric, that is, 
\[
J(A\times B)=J(B\times A), \quad A,B\in {\cal B}(E).
\]
\end{itemize}
\end{assum}

The symmetric form ${\cal E}^{(c)}$ can be extended to ${\cal F}$ (\cite[p.~125]{FOT11}). 
Moreover, for any $u\in {\cal F}$, there exists a unique finite measure $\mu_{\langle u \rangle}^{(c)}$ on $E$ 
such that 
\[
{\cal E}^{(c)}(u,u)=\frac{1}{2}\mu_{\langle u \rangle}^{(c)}(E)
\]
(\cite[Lemma 3.2.3]{FOT11}). 
We call $\mu_{\langle u \rangle}^{(c)}$ the local part of the energy measure of $u\in {\cal F}$.

We say that a function $u$ on $E$ is locally in ${\cal F}$ ($u\in {\cal F}_{{\rm loc}}$ in notation) 
if for any relatively compact open set $G$ in $E$, 
there exists a function $u_G\in {\cal F}$ such that $u=u_G$, $m$-a.e.\ on $G$.
For any $u\in {\cal F}_{{\rm loc}}$, we can well define the measure 
which is consistent  with the local part of the energy measure (\cite[p.~130]{FOT11}).
We use the notation $\mu_{\langle u \rangle}^{(c)}$ also for this measure. 
When $\mu_{\langle u \rangle}^{(c)}$ is absolutely continuous with respect to the measure $m$, 
we write $\Gamma^{(c)}(u)$ for the Radon-Nikodym derivative, 
that is, $\mu_{\langle u \rangle}^{(c)}({\rm d}x)=\Gamma^{(c)}(u)(x)\,m({\rm d}x)$.

We further make the following assumption on the existence of a family of length functions adapted to $({\cal E},{\cal F})$. 

\begin{assum}\label{assum:length}\rm 
There exist  families $\{\rho_r\}_{r>0}\subset {\cal F}_{{\rm loc}}\cap C(E)$ and 
$\{F_r\}_{r>0}\subset C(E\times E)$ such that 
$F_r$ is positive and pointwisely increasing in $r>0$, and the following conditions are satisfied:
\begin{enumerate}
\item[(i)] 
For each $r>0$, the measure $\mu_{\langle \rho_r \rangle}^{(c)}$ is absolutely continuous 
with respect to $m$ and $\esssup_{x\in E}\{\Gamma^{(c)}(\rho_r)(x)\}<\infty$.

\item[(ii)] For any $r>0$, 
\[
\esssup_{x\in E}\int_{0<d(x,y)\le F_r(x,y)}(\rho_r(x)-\rho_r(y))^2\,J(x,{\rm d}y)<\infty
\]
and 
\[
\esssup_{x\in E}\int_{d(x,y)>F_r(x,y)}J(x,{\rm d}y)<\infty.
\]
\end{enumerate} 
Here $\esssup$ is the essential supremum with respect to $m$. 
\end{assum}

Let $\{\rho_r\}_{r>0}\subset {\cal F}_{{\rm loc}}\cap C(E)$ and $\{F_r\}_{r>0}\subset C(E\times E)$ 
satisfy  Assumption \ref{assum:length}. 
For $r>0$, we define 
\[
M_1(r)=\esssup_{x\in E}\left\{\Gamma^{(c)}(\rho_r)(x)\right\}+\esssup_{x\in E}\int_{0<d(x,y)\le F_r(x,y)}(\rho_r(x)-\rho_r(y))^2\,J(x,{\rm d}y)
\]
and 
\[
M_2(r)=\esssup_{x\in E}\int_{d(x,y)>F_r(x,y)}J(x,{\rm d}y). 
\]
For $r>0$ and $R>0$, we also define 
\[
K_{\rho_r}(R)=\left\{x\in E \mid \rho_r(x)\le R\right\}.
\]
We further impose the topological assumption on $K_{\rho_r}(R)$, 
which implies that for any $r>0$, $K_{\rho_r}(R)\nearrow E$ as $R\rightarrow\infty$.

\begin{assum}\label{assum:compact}\rm 
For any $r>0$ and $R>0$, $K_{\rho_r}(R)$ is compact. 
Moreover, for any $r>0$ and for any compact set $K\subset E$, there exists $R>0$ such that $K\subset K_{\rho_r}(R)$. 
\end{assum}

\section{Upper bound of the bottom of the essential spectrum}\label{sect:thm-proof}
Let $({\cal E},{\cal F})$ be a regular Dirichlet form on $L^2(E;m)$. 
Then there exists a unique nonpositive self-adjoint operator $(L,{\cal D}(L))$ on $L^2(E;m)$ such that 
${\cal D}(L)\subset {\cal F}$ and 
\begin{equation}\label{eq:generator}
{\cal E}(u,v)=(-Lu,v), \quad u\in {\cal D}(L), \, v\in {\cal F}.
\end{equation}
Let $\sigma_{{\rm ess}}(-L)$ be the essential spectrum of $-L$, 
and let $\lambda_e=\inf\sigma_{{\rm ess}}(-L)$. 
To show the upper bound of $\lambda_e$, we rely on the following Persson theorem:

\begin{prop}{\rm (see \cite[Proposition 2.1]{HKW13} or \cite[Theorem E.2]{KLW21})}\label{prop:persson}
If there exists a sequence $\{f_n\}\subset {\cal F}$ with $\|f_n\|_{L^2(E;m)}=1$ 
converging weakly to $0$ in $L^2(E;m)$, then 
\[
\lambda_e\le \liminf_{n\rightarrow\infty}{\cal E}(f_n,f_n).
\]
\end{prop}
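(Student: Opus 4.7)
The plan is to argue by contradiction via the spectral calculus for the nonpositive self-adjoint operator $L$ associated with $({\cal E},{\cal F})$. Assume, aiming at a contradiction, that $L_{\ast}:=\liminf_{n\to\infty}{\cal E}(f_n,f_n)<\lambda_e$, and fix $\lambda$ with $L_{\ast}<\lambda<\lambda_e$. Since $\lambda<\lambda_e$, the spectrum $\sigma(-L)\cap[0,\lambda]$ consists only of isolated eigenvalues of finite multiplicity, so that the spectral projection $P:=\chi_{[0,\lambda]}(-L)$ has finite-dimensional range.

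Next I would invoke the spectral theorem to decompose ${\cal E}$. Denote by $\{E_{\mu}\}_{\mu\ge 0}$ the spectral resolution of $-L$. The identification ${\cal F}={\cal D}(\sqrt{-L})$, which comes from \eqref{eq:generator}, yields
$${\cal E}(f,f)=\int_0^{\infty}\mu\,{\rm d}\|E_{\mu}f\|_{L^2(E;m)}^2,\quad f\in{\cal F}.$$
Since $P$ commutes with the spectral resolution, both $Pf_n$ and $(I-P)f_n$ belong to ${\cal F}$, and splitting the spectral integral at $\lambda$ gives
$${\cal E}(f_n,f_n)={\cal E}(Pf_n,Pf_n)+{\cal E}((I-P)f_n,(I-P)f_n)\ge \lambda\,\|(I-P)f_n\|_{L^2(E;m)}^2.$$

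At this point the weak convergence hypothesis enters: because $P$ has finite rank and $f_n\rightharpoonup 0$ weakly in $L^2(E;m)$, one obtains the strong convergence $Pf_n\to 0$ in $L^2(E;m)$, whence $\|(I-P)f_n\|_{L^2(E;m)}^2=1-\|Pf_n\|_{L^2(E;m)}^2\to 1$. Taking the $\liminf$ in the displayed inequality then yields $L_{\ast}\ge\lambda$, which contradicts the choice of $\lambda$ and completes the argument. The only nontrivial point, which I would regard as the main (though standard) obstacle, is the verification that ${\cal E}$ splits along the spectral projection; this rests on the identification of the form domain with ${\cal D}(\sqrt{-L})$ and on the commutation of $P$ with $\sqrt{-L}$, and it is the sole place where the correspondence between $({\cal E},{\cal F})$ and its generator is used in an essential way.
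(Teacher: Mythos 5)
Your proof is correct; note that the paper does not prove Proposition \ref{prop:persson} itself but cites \cite[Proposition 2.1]{HKW13} and \cite[Theorem E.2]{KLW21}, and the argument you give (decompose along the finite-rank spectral projection $P=\chi_{[0,\lambda]}(-L)$ for $\lambda<\lambda_e$, use compactness of $P$ and $f_n\rightharpoonup 0$ to get $Pf_n\to 0$, and bound ${\cal E}(f_n,f_n)\ge\lambda\|(I-P)f_n\|^2$ via the spectral calculus on ${\cal F}={\cal D}(\sqrt{-L})$) is essentially the same standard proof used in those references. All the steps you flag as the only nontrivial ones — the orthogonality ${\cal E}(Pf_n,(I-P)f_n)=0$ and the invariance of ${\cal F}$ under $P$ — indeed follow from the commutation of spectral projections with $\sqrt{-L}$, so the argument is complete.
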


Throughout this section, 
we impose Assumptions \ref{assum:BD}, \ref{assum:length} and \ref{assum:compact} on $({\cal E},{\cal F})$. 
We will establish upper bounds of $\lambda_e$ for the infinite volume and finite volume cases, respectively, 
by using Proposition \ref{prop:persson}.

\subsection{Infinite volume}\label{subsect:infinite}
In this subsection, we assume that $m(E)=\infty$. 
For $r>0$, let 
\[
\mu_r=\liminf_{R\rightarrow\infty}\frac{1}{R}\log m(K_{\rho_r}(R)).
\]

\begin{thm}\label{thm:ess}
Assume that $\mu_r<\infty$ for some $r>0$. Then 
\[
\lambda_e \le \inf_{r>0} \left( \frac{\mu_r^2}{4} M_1(r)+2M_2(r) \right).
\]
\end{thm}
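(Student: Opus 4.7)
The plan is to apply Persson's theorem (Proposition \ref{prop:persson}) to a Brooks-type test sequence built from $\rho_r$. Fix $r>0$ with $\mu_r<\infty$ and $\alpha>\mu_r/2$. My goal is to construct $(f_n)\subset\mathcal{F}$ with $\|f_n\|_{L^2(E;m)}=1$, $f_n\rightharpoonup 0$ in $L^2$, and $\limsup_n \mathcal{E}(f_n,f_n)\le \alpha^2 M_1(r)+2M_2(r)$. Persson then yields $\lambda_e\le \alpha^2 M_1(r)+2M_2(r)$, and letting $\alpha\downarrow \mu_r/2$ followed by taking the infimum over $r$ gives the theorem.

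I would pick a subsequence $R_n\to\infty$ realizing the liminf in the definition of $\mu_r$, and set
\[
f_n(x)=c_n\,\psi_n(\rho_r(x))\,e^{-\alpha\rho_r(x)},
\]
where $\psi_n:[0,\infty)\to[0,1]$ is a Lipschitz cutoff equal to $1$ on a large bulk interval $[a_n+1,b_n-1]\subset(R_n,\infty)$, linear on the two unit-length boundary layers, and $c_n$ normalizes so that $\|f_n\|_{L^2}=1$. Assumption \ref{assum:length}(i) gives $\rho_r\in\mathcal{F}_{\rm loc}\cap C(E)$, so the Lipschitz composition $f_n$ lies in $\mathcal{F}_{\rm loc}$; Assumption \ref{assum:compact} ensures $\mathrm{supp}(f_n)\subset K_{\rho_r}(b_n)$ is compact, hence $f_n\in\mathcal{F}$. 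For weak $L^2$-convergence, the Cauchy--Schwarz bound $|(f_n,g)|\le \|g\mathbf{1}_{\{\rho_r\ge a_n\}}\|_{L^2}$ tends to $0$ as $n\to\infty$ for every $g\in L^2(E;m)$, since Assumption \ref{assum:compact} implies $\{\rho_r\ge a_n\}\searrow \emptyset$ and dominated convergence applies.

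The energy estimate is the core. I split $\mathcal{E}(f_n,f_n)=\mathcal{E}^{(c)}(f_n,f_n)+J^{\rm near}_n+J^{\rm far}_n$, the last two being the jump integrals over $\{d(x,y)\le F_r(x,y)\}$ and $\{d(x,y)>F_r(x,y)\}$. In the local part I use the chain rule $\mu_{\langle f_n\rangle}^{(c)}=|F_n'(\rho_r)|^2\mu_{\langle \rho_r\rangle}^{(c)}$ with $F_n(t)=c_n\psi_n(t)e^{-\alpha t}$, and bound $\Gamma^{(c)}(\rho_r)$ by the first summand of $M_1(r)$; the principal term is $\alpha^2\psi_n^2e^{-2\alpha\rho_r}$ and cross-terms containing $\psi_n'$ are isolated. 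For $J^{\rm near}_n$, the mean-value inequality $(e^{-\alpha a}-e^{-\alpha b})^2\le \alpha^2(a-b)^2(e^{-2\alpha a}\vee e^{-2\alpha b})$ combined with the symmetry of $J(dx,dy)$ reduces matters to the essential-supremum bound in Assumption \ref{assum:length}(ii)---the second summand of $M_1(r)$---times $\|f_n\|^2$. For $J^{\rm far}_n$, the elementary inequality $(f_n(x)-f_n(y))^2\le f_n(x)^2+f_n(y)^2$ (using $f_n\ge 0$), together with the symmetry of $J$ and Assumption \ref{assum:length}(ii), yields $J^{\rm far}_n\le 2M_2(r)\|f_n\|^2=2M_2(r)$.

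The main obstacle will be calibrating the shells so that the cross-terms coming from $\psi_n'$ in the local and near-jump pieces are $o(1)$ after normalization. The strict inequality $\alpha>\mu_r/2$ provides the room: along the subsequence $R_n$, one takes $a_n=R_n$ and widens $L_n=b_n-a_n\to\infty$ slowly, so that the bulk integral $\int_{a_n}^{b_n} e^{-2\alpha t}\,dm(K_{\rho_r}(t))$---whose rate of growth is controlled by the volume-growth hypothesis defining $\mu_r$---dominates the contributions of the unit-width boundary layers. Once this balancing is in place, the three estimates combine to give $\liminf_n \mathcal{E}(f_n,f_n)\le \alpha^2 M_1(r)+2M_2(r)$, and Persson's theorem concludes the argument.
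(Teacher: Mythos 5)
Your overall plan (Persson's theorem applied to a Brooks-type sequence, three-piece energy decomposition, crude $f_n(x)^2+f_n(y)^2$ bound for the far-jump part, $\alpha\downarrow\mu_r/2$ at the end) matches the paper's strategy, but your test sequence does not close the argument. The first problem is the constant: the mean-value bound $(e^{-\alpha a}-e^{-\alpha b})^2\le\alpha^2(a-b)^2(e^{-2\alpha a}\vee e^{-2\alpha b})$, after symmetrizing over $J(\mathrm{d}x,\mathrm{d}y)$ via $\max\le\mathrm{sum}$, yields $2\alpha^2$ (not $\alpha^2$) times the jump summand of $M_1(r)$, so you would not reach the stated $\tfrac{\mu_r^2}{4}M_1(r)$. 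The paper instead uses the sharper elementary inequality $\varphi(t)=\tfrac{(1-e^{\alpha t})^2}{1+e^{2\alpha t}}\le\tfrac{\alpha^2 t^2}{2}$, which puts $\tfrac{\alpha^2}{2}$ in front of the \emph{sum} $g_n(x)^2+g_n(y)^2$ rather than $\alpha^2$ in front of the max (Lemma \ref{lem:upper-bound}); then symmetrization produces exactly $\alpha^2 M_1(r)$.

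The second and more serious problem is the calibration of the cutoff cross-terms. With $f_n=c_n\psi_n(\rho_r)e^{-\alpha\rho_r}$ and $\psi_n$ linear on unit-width boundary layers, the energy acquires a term of order $c_n^2\int|\psi_n'|^2e^{-2\alpha\rho_r}\,\mathrm{d}m$, and you assert this is $o(1)$ by widening $b_n-a_n$. That is false even in the model case $m(K_{\rho_r}(t))\asymp e^{\mu_r t}$: there $\int_{a_n}^{a_n+1}e^{-2\alpha t}\,\mathrm{d}m(K_{\rho_r}(t))$ and $\int_{a_n}^{b_n}e^{-2\alpha t}\,\mathrm{d}m(K_{\rho_r}(t))$ are both comparable to $e^{-(2\alpha-\mu_r)a_n}$, so the inner unit layer carries a fixed positive fraction of $\|f_n\|_{L^2}^2$ no matter how large $b_n$ is; and since $\mu_r$ is only a $\liminf$, the measure can concentrate inside a layer and defeat any widening of it. The paper's test function $f_n^{(r)}=e^{h_n^{(r)}}-1$ needs no cutoff --- it already vanishes outside $K_{\rho_r}(R_n)$ --- and its $L^2$ mass is pinned from below by the plateau, $\int (f_n^{(r)})^2\,\mathrm{d}m\ge (e^{\alpha R_n/2}-1)^2 m(K_{\rho_r}(R_n/2))$. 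The entire correction in passing from $g_n^{(r)}$ to $f_n^{(r)}$ is $4\int f_n^{(r)}\,\mathrm{d}m+4m(K_{\rho_r}(R_n))$ (Lemma \ref{lem:compare}), and this is $o(\|f_n^{(r)}\|_{L^2}^2)$ precisely because the $R_n$ realize the $\liminf$ and $\alpha>\mu_r/2$. That plateau, not a traveling shell, is the idea your proposal is missing.
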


By comparison with the previous works \cite{B81, F14, HKW13, N98, R17}, 
Theorem \ref{thm:ess} includes not only the volume growth rate, 
but also the big jump rate expressed as $M_2(r)$. 
A key point in our argument is to take into consideration  the degree of farness of points in $E$ 
by introducing the function $F_r(x,y)$. 
We can then adapt the approach of the previous works \cite{B81, F14, HKW13, N98, R17} 
to the relatively small jump part.

In what follows, we assume that $\mu_r<\infty$ for some $r>0$. 
Then there exists a positive sequence $\{R_n\}$ such that $R_n \rightarrow \infty$ as $n \rightarrow \infty$ and
\[
\mu_r=\lim_{n\rightarrow\infty}\frac{1}{R_n}\log m(K_{\rho_r}(R_n)).
\]
We fix such a sequence $\{R_n\}$. 

For any $\alpha>\mu_r/2$, let $w_n \ (n\ge 1)$ be a continuous function on $[0,\infty)$ such that 
\[
w_n(t)=
\begin{dcases}
\alpha R_n/2, & 0\le t \le R_n/2, \\
\alpha (R_n-t), & R_n/2 < t \le R_n, \\
0, & t>R_n. 
\end{dcases}
\]
Define $h_n^{(r)}(x)=w_n(\rho_r(x))$ and $f_n^{(r)}(x)=e^{h_n^{(r)}(x)}-1$. 
We first prove that 
we can take the sequence $\{f_n^{(r)}/\|f_n^{(r)}\|_{L^2(E;m)}\}$ 
as $\{f_n\}$ in  Proposition \ref{prop:persson}.

\begin{lem}\label{lem:local}
\begin{enumerate}
\item[{\rm (i)}] For any $n\ge 1$, $h_n^{(r)}$ and $f_n^{(r)}$ belong to ${\cal F} \cap C_0(E)$.
\item[{\rm (ii)}] The sequence $\{ f_n^{(r)}/\|f_n^{(r)}\|_{L^2(E;m)} \}$ is weakly convergent to $0$ in $L^2(E;m)$. 
In particular, 
\begin{equation}\label{eq:persson-1}
\lambda_e \le \liminf_{n\rightarrow\infty}\frac{{\cal E}(f_n^{(r)},f_n^{(r)})}{\|f_n^{(r)}\|_{L^2(E;m)}^2}.
\end{equation}
\end{enumerate}
\end{lem}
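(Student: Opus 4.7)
The plan is to verify continuity, compact support, and $\cal F$-membership of $h_n^{(r)}$ and $f_n^{(r)}$ for part (i), and then establish weak convergence of the normalized sequence in part (ii) by bounding $\|f_n^{(r)}\|_{L^2(E;m)}$ from below and testing against compactly supported functions, before invoking Proposition \ref{prop:persson}.

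For part (i), both $h_n^{(r)}$ and $f_n^{(r)}$ are continuous as compositions of continuous functions, and both vanish wherever $\rho_r>R_n$; since $K_{\rho_r}(R_n)$ is compact by Assumption \ref{assum:compact}, they lie in $C_0(E)$. For the $\cal F$-membership of $h_n^{(r)}$, I would fix a relatively compact open $G$ with $K_{\rho_r}(R_n)\subset G$ and take $u_G\in{\cal F}$ with $u_G=\rho_r$ m-a.e.\ on $G$ (using $\rho_r\in{\cal F}_{\rm loc}$), together with a cutoff $\chi\in{\cal F}\cap C_0(E)$ satisfying $\chi\equiv 1$ on $K_{\rho_r}(R_n)$ and $\supp\chi\subset G$. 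A case analysis on $K_{\rho_r}(R_n)$, $G\setminus K_{\rho_r}(R_n)$, and $E\setminus G$ then shows $w_n(u_G)\,\chi=h_n^{(r)}$ m-a.e.\ on $E$; writing $w_n(u_G)\,\chi=(w_n(u_G)-w_n(0))\chi+w_n(0)\chi$ places this product in $\cal F$ via normal contraction (the Lipschitz function $w_n-w_n(0)$ vanishes at $0$) combined with the product rule in ${\cal F}\cap L^\infty$. Then $f_n^{(r)}=e^{h_n^{(r)}}-1$ lies in $\cal F$ by applying the Lipschitz map $t\mapsto e^t-1$ (vanishing at $0$) to the bounded function $h_n^{(r)}\in{\cal F}$.

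For part (ii), since $h_n^{(r)}\equiv\alpha R_n/2$ on $K_{\rho_r}(R_n/2)$,
\[
\|f_n^{(r)}\|_{L^2(E;m)}^2\ge (e^{\alpha R_n/2}-1)^2\,m(K_{\rho_r}(R_n/2)).
\]
For any compactly supported $g\in L^2(E;m)$, Assumption \ref{assum:compact} gives $\supp g\subset K_{\rho_r}(R_0)$ for some $R_0>0$, and once $R_n>2R_0$ we have $f_n^{(r)}=e^{\alpha R_n/2}-1$ on $\supp g$; Cauchy-Schwarz then yields
\[
\left|\int_E \frac{f_n^{(r)}}{\|f_n^{(r)}\|_{L^2(E;m)}}\,g\,{\rm d}m\right|\le \frac{\|g\|_{L^2(E;m)}\,m(\supp g)^{1/2}}{m(K_{\rho_r}(R_n/2))^{1/2}},
\]
which tends to $0$ because $m(E)=\infty$ combined with $K_{\rho_r}(R)\nearrow E$ forces $m(K_{\rho_r}(R_n/2))\to\infty$. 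Since the normalized sequence has $L^2$-norm equal to $1$ and compactly supported functions are dense in $L^2(E;m)$, weak convergence to $0$ follows by a standard three-$\varepsilon$ argument, and \eqref{eq:persson-1} is immediate from Proposition \ref{prop:persson}.

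The main technical issue I anticipate is the $\cal F$-membership in (i): because $w_n(0)=\alpha R_n/2\neq 0$, normal contraction cannot be invoked directly on $\rho_r\in{\cal F}_{\rm loc}$, and one must instead localize via the cutoff $\chi$ and split off the constant $w_n(0)$ as above. The divergence of $m(K_{\rho_r}(R_n/2))$ in (ii) is immediate from the standing assumptions of the subsection, so there is no deeper obstacle there.
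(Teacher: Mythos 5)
Your proof is correct and essentially matches the paper's argument: the $C_0(E)$-membership via Assumption \ref{assum:compact}, the reduction of $f_n^{(r)}\in{\cal F}$ to $h_n^{(r)}\in{\cal F}$ using the effective Lipschitz bound on $t\mapsto e^t-1$ on the bounded range of $h_n^{(r)}$, and the lower bound $\|f_n^{(r)}\|_{L^2(E;m)}^2\ge(e^{\alpha R_n/2}-1)^2\,m(K_{\rho_r}(R_n/2))\rightarrow\infty$ all appear in the paper. The only differences are cosmetic: for (i) you reconstruct in full the cutoff/normal-contraction argument (splitting off the nonzero constant $w_n(0)$ before contracting) that the paper delegates to \cite[Lemma 3.1]{S15}, and for (ii) you first test against compactly supported functions and then invoke density of $C_0(E)$ in $L^2(E;m)$, whereas the paper runs the split $\int u f_n^{(r)}{\bf 1}_{K_{\rho_r}(R_*)}\,\d m+\int u f_n^{(r)}{\bf 1}_{K_{\rho_r}(R_*)^c}\,\d m$ directly for arbitrary $u\in L^2(E;m)$ --- the underlying estimate is the same either way.
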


\begin{proof}
We first prove (i). 
Since $h_n^{(r)}$ and $f_n^{(r)}$ belong to $C_0(E)$ by definition and Assumption \ref{assum:compact}, 
it is sufficient to show that $h_n^{(r)}$ and $f_n^{(r)}$ belong to ${\cal F}$. 

Since $\rho_r\in {\cal F}_{{\rm loc}}$, 
we can  follow the proof of \cite[Lemma 3.1]{S15} to see that $h_n^{(r)}\in {\cal F}$. 
We also know that $|e^t-e^s|\le e^{s\vee t}|t-s|$ for any $s,t\in {\mathbb R}$. 
Then 
\begin{equation*}
\begin{split}
|f_n^{(r)}(x)-f_n^{(r)}(y)|=|e^{h_n^{(r)}(x)}-e^{h_n^{(r)}(y)}|
&\le e^{h_n^{(r)}(x)\vee h_n^{(r)}(y)}|h_n^{(r)}(x)-h_n^{(r)}(y)|\\
&\le e^{\alpha R_n/2}|h_n^{(r)}(x)-h_n^{(r)}(y)|
\end{split}
\end{equation*}
and 
\[
|f_n^{(r)}(x)|=|e^{h_n^{(r)}(x)}-1|
\le e^{h_n^{(r)}(x)}|h_n^{(r)}(x)| \le e^{\alpha R_n/2} |h_n^{(r)}(x)|.
\]
Since $h_n^{(r)}\in {\cal F} \cap C_0(E)$, 
we have $f_n^{(r)}\in {\cal F} \cap C_0(E)$ by \cite[p.\ 5]{FOT11}.

We next prove (ii). 
Take any $u\in L^2(E;m)$ with $u\ne 0$. Then for any $R>0$,  
\[
\int_E u f_n^{(r)} \,{\rm d}m=\int_E u f_n^{(r)} {\bf 1}_{K_{\rho_r}(R)} \, {\rm d}m+\int_E u f_n^{(r)} {\bf 1}_{K_{\rho_r}(R)^c} \, {\rm d}m.
\]
By the Schwarz inequality, 
\[
\left| \int_E u f_n^{(r)} {\bf 1}_{K_{\rho_r}(R)} \, {\rm d}m \right|
\le \left( \int_E u^2\,{\rm d}m\right)^{1/2} \left(\int_E (f_n^{(r)})^2{\bf 1}_{K_{\rho_r}(R)}\,{\rm d}m\right)^{1/2}
\]
and 
\[
\left| \int_E u f_n^{(r)} {\bf 1}_{K_{\rho_r}(R)^c} \, {\rm d}m \right|
\le \left(\int_E u^2{\bf 1}_{K_{\rho_r}(R)^c}\,{\rm d}m\right)^{1/2} \left( \int_E (f_n^{(r)})^2 \,{\rm d}m \right)^{1/2}.
\]

By the definition of $f_n^{(r)}$, 
\begin{equation}\label{eq:fn-div}
\int_E (f_n^{(r)})^2\,{\rm d}m
\ge \int_{\rho_r(x) \le R_n/2} (f_n^{(r)})^2\,{\rm d}m
= (e^{\alpha R_n/2}-1)^2 m(K_{\rho_r}(R_n/2))\rightarrow\infty, \quad n\rightarrow\infty.
\end{equation}
Let $\varepsilon>0$. 
Then by Assumption \ref{assum:compact}, there exists $R_*>0$ such that 
\[
\left(\int_E u^2{\bf 1}_{K_{\rho_r}(R_*)^c}\,{\rm d}m\right)^{1/2}<\frac{\varepsilon}{2}.
\]
Note that for any $n\in {\mathbb N}$ with $R_n/2\ge R_*$, 
\[
\int_E (f_n^{(r)})^2{\bf 1}_{K_{\rho_r}(R_*)}\,{\rm d}m\le (e^{\alpha R_n}/2-1)^2 m(K_{\rho_r}(R_*)).
\]
Then, as \eqref{eq:fn-div} holds and $m(E)=\infty$ by assumption,
there exists $N_0\in {\mathbb N}$ such that for any $n\ge N_0$,
\[
\left( \frac{\int_E (f_n^{(r)})^2{\bf 1}_{K_{\rho_r}(R_*)}\,{\rm d}m}{\| f_n^{(r)} \|_{L^2(E;m)}^2} \right)^{1/2} 
= \left( \frac{\int_E (f_n^{(r)})^2{\bf 1}_{K_{\rho_r}(R_*)}\,{\rm d}m}{\int_E (f_n^{(r)})^2 \,{\rm d}m} \right)^{1/2} 
< \frac{\varepsilon}{2\|u\|_{L^2(E;m)}}.
\]
Therefore, for any $n\ge N_0$, 
\begin{equation*}
\begin{split}
\left| \int_E u f_n^{(r)}  \,{\rm d}m \right|
&= \left| \int_E u f_n^{(r)} {\bf 1}_{K_{\rho_r}(R_*)} \, {\rm d}m + \int_E u f_n^{(r)} {\bf 1}_{K_{\rho_r}(R_*)^c} \, {\rm d}m \right| \\
&\le  \left| \int_E u f_n^{(r)} {\bf 1}_{K_{\rho_r}(R_*)} \, {\rm d}m \right| + \left| \int_E u f_n^{(r)} {\bf 1}_{K_{\rho_r}(R_*)^c} \, {\rm d}m \right| \\
&\le  \|u\|_{L^2(E;m)} \left(\int_E (f_n^{(r)})^2{\bf 1}_{K_{\rho_r}(R_*)}\,{\rm d}m\right)^{1/2} 
+ \left(\int_E u^2{\bf 1}_{K_{\rho_r}(R_*)^c}\,{\rm d}m\right)^{1/2} \| f_n^{(r)} \|_{L^2(E;m)} \\
&< \varepsilon \| f_n^{(r)} \|_{L^2(E;m)}.
\end{split}
\end{equation*}
This completes the proof of the first assertion of (ii). 
Proposition \ref{prop:persson} further yields \eqref{eq:persson-1}. 
\end{proof}

We next discuss the upper bound of ${\cal E}(f_n^{(r)},f_n^{(r)})$. 
Let $g_n^{(r)}=(f_n^{(r)}+2){\bf 1}_{K_{\rho_r}(R_n)}$ and 
\begin{equation}\label{eq:varphi}
\varphi(t)=\frac{(1-e^{\alpha t})^2}{1+e^{2\alpha t}}, \quad t\in {\mathbb R}.
\end{equation}
Then $\varphi$ is even and nonnegative, and strictly increasing for $t\ge 0$ such that 
\[
\varphi(t)=1-\frac{2e^{\alpha t}}{1+e^{2\alpha t}} \le \frac{\alpha^2t^2}{2}, \quad t\in {\mathbb R}.
\]

\begin{lem}\label{lem:upper-bound}
\begin{enumerate}
\item[{\rm (i)}] For any $n\ge 1$, 
\begin{equation}\label{eq:f-bound}
(f_n^{(r)}(x)-f_n^{(r)}(y))^2
\le \varphi(|\rho_r(x)-\rho_r(y)|) (g_n^{(r)}(x)^2+g_n^{(r)}(y)^2), \quad x,y\in E.
\end{equation}
In particular, 
\[
(f_n^{(r)}(x)-f_n^{(r)}(y))^2
\le \frac{\alpha^2}{2}(\rho_r(x)-\rho_r(y))^2 (g_n^{(r)}(x)^2+g_n^{(r)}(y)^2), \quad x,y\in E.
\]
\item[{\rm (ii)}] 
For any $n\ge 1$, 
\[
{\cal E}(f_n^{(r)},f_n^{(r)})
\le \alpha^2 M_1(r) \int_E (g_n^{(r)})^2\,{\rm d}m+ 2M_2(r) \int_E (f_n^{(r)})^2\,{\rm d}m.
\]
\end{enumerate}
\end{lem}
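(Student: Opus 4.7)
The plan for (i) rests on the algebraic identity
\[
(e^a-e^b)^2 = \varphi\!\left(\frac{|a-b|}{\alpha}\right)(e^{2a}+e^{2b}), \qquad a,b\in\R,
\]
verified by direct computation (assuming $a\ge b$, both sides equal $e^{2b}(e^{a-b}-1)^2$). I apply it with $a=h_n^{(r)}(x)$ and $b=h_n^{(r)}(y)$. Since $w_n$ is piecewise linear with slopes in $\{0,-\alpha\}$, it is $\alpha$-Lipschitz, so $|h_n^{(r)}(x)-h_n^{(r)}(y)|\le \alpha|\rho_r(x)-\rho_r(y)|$, and combining with the evenness and monotonicity of $\varphi$ on $[0,\infty)$ gives
\[
(f_n^{(r)}(x)-f_n^{(r)}(y))^2 \le \varphi(|\rho_r(x)-\rho_r(y)|)\bigl(e^{2h_n^{(r)}(x)}+e^{2h_n^{(r)}(y)}\bigr).
\]
To pass from $e^{2h_n^{(r)}}$ to $(g_n^{(r)})^2$ I do a short case split on $K_{\rho_r}(R_n)$. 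Inside, $e^{2h_n^{(r)}}\le (e^{h_n^{(r)}}+1)^2=(g_n^{(r)})^2$ is immediate; if both $x,y\notin K_{\rho_r}(R_n)$ then the left-hand side vanishes; in the mixed case, say $x\in K_{\rho_r}(R_n)$ and $y\notin K_{\rho_r}(R_n)$, one has $h_n^{(r)}(y)=0$ and $|\rho_r(x)-\rho_r(y)|\ge R_n-\rho_r(x)\ge h_n^{(r)}(x)/\alpha$, so by monotonicity of $\varphi$ together with $1+e^{2u}\le (1+e^u)^2$,
\[
\varphi(|\rho_r(x)-\rho_r(y)|)\ge \varphi(h_n^{(r)}(x)/\alpha)=\frac{(e^{h_n^{(r)}(x)}-1)^2}{1+e^{2h_n^{(r)}(x)}}\ge \frac{(e^{h_n^{(r)}(x)}-1)^2}{(e^{h_n^{(r)}(x)}+1)^2},
\]
which is exactly what is required. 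The ``in particular'' statement then follows from the pointwise estimate $\varphi(t)\le \alpha^2 t^2/2$ recorded just after \eqref{eq:varphi}.

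For (ii), I decompose via Assumption \ref{assum:BD} into the strongly local part and the jump part. On the local part, the chain rule applied to $f_n^{(r)}=\Phi(h_n^{(r)})$ with $\Phi(t)=e^t-1$ gives $\mu_{\langle f_n^{(r)}\rangle}^{(c)}=e^{2h_n^{(r)}}(w_n'(\rho_r))^2\,\Gamma^{(c)}(\rho_r)\,{\rm d}m$, supported in $\{R_n/2<\rho_r<R_n\}\subset K_{\rho_r}(R_n)$ where $|w_n'|=\alpha$ and $e^{2h_n^{(r)}}\le (g_n^{(r)})^2$; hence ${\cal E}^{(c)}(f_n^{(r)},f_n^{(r)})\le \tfrac{\alpha^2}{2}\,\esssup_x\Gamma^{(c)}(\rho_r)(x)\cdot\int_E(g_n^{(r)})^2\,{\rm d}m$. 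I split the jump part by ${\bf 1}_{d(x,y)\le F_r(x,y)}$ and its complement. On the small-jump region, apply part (i) in the $\alpha^2 t^2/2$ form, exploit the symmetry of $J(x,{\rm d}y)\,m({\rm d}x)$ to collapse $(g_n^{(r)}(x))^2+(g_n^{(r)}(y))^2$ into $2(g_n^{(r)}(x))^2$, and pull out $\esssup_x\int_{0<d(x,y)\le F_r(x,y)}(\rho_r(x)-\rho_r(y))^2\,J(x,{\rm d}y)$; this yields at most $\alpha^2$ times the second term of $M_1(r)$ multiplied by $\int(g_n^{(r)})^2\,{\rm d}m$. Summing with the local estimate and using $\tfrac12\,(\text{first term of }M_1(r))+(\text{second term of }M_1(r))\le M_1(r)$ produces $\alpha^2 M_1(r)\int(g_n^{(r)})^2\,{\rm d}m$. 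On the big-jump region, use $(a-b)^2\le 2(a^2+b^2)$, symmetrize via $J$, and pull out $M_2(r)$ to obtain at most $2M_2(r)\int(f_n^{(r)})^2\,{\rm d}m$.

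The chief book-keeping difficulty I anticipate is the symmetrization of the jump integrals: although $J(x,{\rm d}y)\,m({\rm d}x)$ is symmetric, the comparison function $F_r(x,y)$ is not explicitly assumed symmetric in $x,y$, so interchanging $x$ and $y$ in an integrand carrying ${\bf 1}_{d(x,y)\le F_r(x,y)}$ is not entirely cost-free. I expect to resolve this either by replacing $F_r(x,y)$ with the symmetrized version $F_r(x,y)\vee F_r(y,x)$ (which only strengthens Assumption \ref{assum:length}(ii)), or by partitioning at $F_r(x,y)\wedge F_r(y,x)$ and absorbing the mismatched indicator into the big-jump term; in either case the essential suprema are preserved up to harmless constants and the advertised estimate follows.
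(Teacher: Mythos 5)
Your proof of (i) is a genuinely cleaner route than the paper's. The paper omits the proof here with a reference to the case-by-case argument of Haeseler--Keller--Wojciechowski (carried out explicitly in Lemma \ref{lem:f-bound-f} for the finite-volume case), distinguishing the five configurations of $\rho_r(x),\rho_r(y)$ relative to $R_n/2,R_n$. Your identity
\[
(e^a-e^b)^2=\varphi\!\left(\frac{|a-b|}{\alpha}\right)\bigl(e^{2a}+e^{2b}\bigr)
\]
combined with the $\alpha$-Lipschitz property of $w_n$ and monotonicity of $\varphi$ collapses all those cases at once, and the residual case split on $K_{\rho_r}(R_n)$ (to convert $e^{2h_n^{(r)}}$ into $(g_n^{(r)})^2$) is correctly handled via $1+e^{2u}\le(1+e^u)^2$ and the observation $|\rho_r(x)-\rho_r(y)|\ge h_n^{(r)}(x)/\alpha$ in the mixed case. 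This is a nice simplification and buys a more unified proof.

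There is, however, a concrete error in your treatment of the big-jump term in (ii). You invoke $(a-b)^2\le 2(a^2+b^2)$, which after symmetrizing yields $4M_2(r)\int(f_n^{(r)})^2\,{\rm d}m$, \emph{not} the stated $2M_2(r)\int(f_n^{(r)})^2\,{\rm d}m$. The factor $2$ requires the sharper pointwise bound $(a-b)^2\le a^2+b^2$, which is available precisely because $f_n^{(r)}=e^{h_n^{(r)}}-1\ge 0$ so that $2f_n^{(r)}(x)f_n^{(r)}(y)\ge 0$; this is exactly how the paper obtains \eqref{eq:big-jump-bound}. Without invoking nonnegativity your argument loses a factor of two and does not prove the stated inequality.

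On the symmetry of $F_r$: you are right that Assumption \ref{assum:length} does not explicitly require $F_r(x,y)=F_r(y,x)$, yet the paper's proof (and yours) uses it when swapping $x$ and $y$ under $J(x,{\rm d}y)m({\rm d}x)$. In all the paper's applications $F_r$ is symmetric by construction, and the cleanest repair is simply to add symmetry of $F_r$ to the hypothesis. Your two suggested workarounds are not clearly sound as stated: replacing $F_r$ by $F_r\vee\tilde F_r$ enlarges the small-jump region and may break the first bound in Assumption \ref{assum:length}(ii), while partitioning at $F_r\wedge\tilde F_r$ enlarges the big-jump region and requires control of $\int_{F_r(y,x)<d(x,y)\le F_r(x,y)}J(x,{\rm d}y)$, which the assumption bounds only after multiplication by $(\rho_r(x)-\rho_r(y))^2$. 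So neither fix follows ``cost-free''; the symmetry hypothesis is genuinely being used.
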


\begin{proof}
We omit the proof of (i) because it is the same with that of \cite[Lemma 2.5]{HKW13}. 
Let us prove (ii).
For $u\in {\cal F}_{{\rm loc}} \cap C(E)$, we define 
\[
{\cal E}^{(r),1}(u,u)=\iint_{0<d(x,y)\le F_r(x,y)}(u(x)-u(y))^2\,J(x,{\rm d}y)m({\rm d}x)
\]
and 
\[
{\cal E}^{(r),2}(u,u)=\iint_{d(x,y)>F_r(x,y)}(u(x)-u(y))^2\,J(x,{\rm d}y)m({\rm d}x)
\]
so that 
\begin{equation}\label{eq:decomp}
{\cal E}(u,u)={\cal E}^{(c)}(u,u)+{\cal E}^{(r),1}(u,u)+{\cal E}^{(r),2}(u,u).
\end{equation}

Since $w_n(t)=0\vee \{\alpha(R_n-t)\}\wedge (\alpha R_n/2)$ and $R_n/2-\rho_r\in {\cal F}_{{\rm loc}}$, 
we have by the chain rule and the contraction property of $\Gamma^{(c)}$ (\cite[p.\ 190]{St94}), 
\begin{equation*}
\begin{split}
{\cal E}^{(c)}(f_n^{(r)},f_n^{(r)})
&=\int_E \Gamma^{(c)}(f_n^{(r)})\,{\rm d}m
=\int_E  e^{2w_n(\rho_r)}\Gamma^{(c)}(w_n(\rho_r))\,{\rm d}m\\
&\le \alpha^2 \int_E (g_n^{(r)})^2 \Gamma^{(c)}(\rho_r)\,{\rm d}m
\le \alpha^2 \sup_{z\in E}\left\{\Gamma^{(c)}(\rho_r(z))\right\} \int_E (g_n^{(r)})^2\,{\rm d}m.
\end{split}
\end{equation*}
By (i), $F_r(x,y)=F_r(y,x)$ and $J(x,{\rm d}y)m({\rm d}x)=J(y,{\rm d}x)m({\rm d}y)$, 
we also have 
\begin{equation*}
\begin{split}
{\cal E}^{(r),1}(f_n^{(r)},f_n^{(r)})
&=\iint_{0<d(x,y)\le F_r(x,y)}(f_n^{(r)}(x)-f_n^{(r)}(y))^2\,J(x,{\rm d}y)m({\rm d}x)\\
&\le \frac{\alpha^2}{2}\iint_{0<d(x,y)\le F_r(x,y)}(\rho_r(x)-\rho_r(y))^2(g_n^{(r)}(x)^2+g_n^{(r)}(y)^2)\,J(x,{\rm d}y)m({\rm d}x)\\
&=\alpha^2 \int_E g_n^{(r)}(x)^2 \left(\int_{0<d(x,y)\le F_r(x,y)}(\rho_r(x)-\rho_r(y))^2\,J(x,{\rm d}y)\right)\,m({\rm d}x)\\
&\le \alpha^2 \sup_{z\in E}\left(\int_{0<d(z,y)\le F_r(z,y)}(\rho_r(z)-\rho_r(y))^2\,J(z,{\rm d}y)\right)\int_E (g_n^{(r)})^2\,{\rm d}m.
\end{split}
\end{equation*}
Therefore, 
\begin{equation}\label{eq:f-bound-M}
{\cal E}^{(c)}(f_n^{(r)},f_n^{(r)})+{\cal E}^{(r),1}(f_n^{(r)},f_n^{(r)})\le \alpha^2 M_1(r)\int_E (g_n^{(r)})^2\,{\rm d}m.
\end{equation}
Since $f_n^{(r)}$ is nonnegative, $F_r(x,y)=F_r(y,x)$ and $J(x,{\rm d}y)m({\rm d}x)=J(y,{\rm d}x)m({\rm d}y)$, 
we obtain 
\begin{equation}\label{eq:big-jump-bound}
\begin{split}
{\cal E}^{(r),2}(f_n^{(r)},f_n^{(r)})
&= \iint_{d(x,y)>F_r(x,y)}( f_n^{(r)}(x) - f_n^{(r)}(y) )^2\,J(x,{\rm d}y)m({\rm d}x)\\
&\le  \iint_{d(x,y)>F_r(x,y)}( f_n^{(r)}(x)^2 + f_n^{(r)}(y)^2 )\,J(x,{\rm d}y)m({\rm d}x)\\
&= 2\int_E f_n^{(r)}(x)^2 \left(\int_{d(x,y)>F_r(x,y)}\,J(x,{\rm d}y)\right)\,m({\rm d}x)\\
&\le 2M_2(r)\int_E (f_n^{(r)})^2\, {\rm d}m.
\end{split}
\end{equation}
By \eqref{eq:decomp} with $u=f_n^{(r)}$, \eqref{eq:f-bound-M} and \eqref{eq:big-jump-bound},  
we arrive at the desired assertion. 
\end{proof}

We finally prove the asymptotic equivalence of the norms of $f_n^{(r)}$ and $g_n^{(r)}$.
\begin{lem}\label{lem:compare}
The sequences $\{f_n^{(r)}\}$ and $\{g_n^{(r)}\}$ satisfy 
$\|f_n^{(r)}\|_{L^2(E;m)}/\|g_n^{(r)}\|_{L^2(E;m)}\rightarrow 1$ as $n\rightarrow \infty$.
\end{lem}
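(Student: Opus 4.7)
The plan is to reduce the claim to the single volume-growth estimate $m(K_{\rho_r}(R_n))/\|f_n^{(r)}\|_{L^2(E;m)}^2 \to 0$, and then to verify that estimate by comparing the lower bound on $\|f_n^{(r)}\|_{L^2(E;m)}^2$ coming from the plateau $\{\rho_r \le R_n/2\}$ with the two-sided control of $m(K_{\rho_r}(R))$ supplied by the definition of $\mu_r$.

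First I would observe that both $f_n^{(r)}$ and $g_n^{(r)}$ are supported on $K_n := K_{\rho_r}(R_n)$ (since $w_n \equiv 0$ outside $[0,R_n]$), and that on $K_n$ one has $g_n^{(r)} = f_n^{(r)} + 2$. Expanding the square yields the identity
\begin{equation*}
\|g_n^{(r)}\|_{L^2(E;m)}^2 - \|f_n^{(r)}\|_{L^2(E;m)}^2 = 4\int_E f_n^{(r)}\,{\rm d}m + 4\,m(K_n).
\end{equation*}
Since $g_n^{(r)} \ge f_n^{(r)} \ge 0$, the inequality $\|f_n^{(r)}\|_{L^2(E;m)} \le \|g_n^{(r)}\|_{L^2(E;m)}$ is automatic, so the claim is equivalent to the right-hand side being $o(\|f_n^{(r)}\|_{L^2(E;m)}^2)$. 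Applying the Cauchy--Schwarz inequality, $\int_E f_n^{(r)}\,{\rm d}m \le m(K_n)^{1/2}\|f_n^{(r)}\|_{L^2(E;m)}$, so the task collapses to proving
\begin{equation*}
\frac{m(K_{\rho_r}(R_n))}{\|f_n^{(r)}\|_{L^2(E;m)}^2} \longrightarrow 0.
\end{equation*}

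For the denominator I would use the cheap lower bound obtained by restricting the integral to the plateau: since $f_n^{(r)} \equiv e^{\alpha R_n/2}-1$ on $\{\rho_r \le R_n/2\}$,
\begin{equation*}
\|f_n^{(r)}\|_{L^2(E;m)}^2 \ge (e^{\alpha R_n/2}-1)^2\,m(K_{\rho_r}(R_n/2)) \ge \tfrac{1}{2}\,e^{\alpha R_n}\,m(K_{\rho_r}(R_n/2))
\end{equation*}
for $n$ large. For the numerator I would use that along the chosen sequence $R_n^{-1}\log m(K_{\rho_r}(R_n)) \to \mu_r$, giving $m(K_{\rho_r}(R_n)) \le e^{R_n(\mu_r+\epsilon)}$ eventually. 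For the mid-scale volume in the denominator I would appeal instead to the $\liminf$ definition of $\mu_r$: for any $\epsilon>0$ one has $m(K_{\rho_r}(R)) \ge e^{R(\mu_r-\epsilon)}$ for every sufficiently large $R$, hence $m(K_{\rho_r}(R_n/2)) \ge e^{(R_n/2)(\mu_r-\epsilon)}$ for $n$ large. Combining the three bounds, the ratio is controlled by $2\,e^{R_n(\mu_r/2 - \alpha + 3\epsilon/2)}$.

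Since $\alpha > \mu_r/2$ by hypothesis, I may choose $\epsilon>0$ with $3\epsilon/2 < \alpha - \mu_r/2$, making the exponent strictly negative, so the ratio tends to $0$ and the lemma follows. The one delicate point worth flagging is the asymmetric use of $\mu_r$: the upper bound on $m(K_{\rho_r}(R_n))$ is extracted from the distinguished subsequence along which an honest \emph{limit} holds, whereas the lower bound on the mid-scale volume $m(K_{\rho_r}(R_n/2))$ must come from the general $\liminf$ definition, which applies at every scale going to infinity. Once these two inputs are correctly lined up, the rest is arithmetic.
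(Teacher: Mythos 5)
Your proposal is correct and follows essentially the same route as the paper: the same identity $\|g_n^{(r)}\|^2 = \|f_n^{(r)}\|^2 + 4\int_E f_n^{(r)}\,{\rm d}m + 4m(K_{\rho_r}(R_n))$, the same Cauchy--Schwarz reduction of the middle term, the same plateau lower bound on $\|f_n^{(r)}\|^2$, and the same asymmetric use of the $\liminf$ (lower volume bound at every large scale) versus the limit along $\{R_n\}$ (upper volume bound), with only cosmetic differences in the $\varepsilon$-bookkeeping.
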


\begin{proof}
By the definitions of $\{f_n^{(r)}\}$ and $\{g_n^{(r)}\}$, 
\begin{equation}\label{eq:decomp-g}
\int_E (g_n^{(r)})^2\,{\rm d}m
=\int_E (f_n^{(r)})^2\,{\rm d}m+4\int_E f_n^{(r)} \,{\rm d}m+4m(K_{\rho_r}(R_n)).
\end{equation}
Then
\[
\int_E (f_n^{(r)})^2\,{\rm d}m
\ge \int_{\rho_r(x)\le R_n/2} (f_n^{(r)})^2\,{\rm d}m=(e^{\alpha R_n/2 }-1)^2m(K_{\rho_r}(R_n/2)).
\]
By the definition of $\mu_r$, we see that for any $\varepsilon>0$, 
there exists $R_*>0$ such that 
\[
m(K_{\rho_r}(R))\ge e^{(\mu_r-\varepsilon )R}, \quad R\ge R_*.
\]
By the definition of $\{R_n\}$, we can also take $N\in {\mathbb N}$ with $R_N\ge R_*$ such that 
\[
m(K_{\rho_r}(R_n))\le e^{( \mu_r + \varepsilon/2 ) R_n}, \quad n\ge N.
\]
Hence if we take $\varepsilon\in ( 0, \alpha-\mu_r/2 )$, then  
\begin{equation}\label{eq:compare-1}
\begin{split}
\frac{m(K_{\rho_r}(R_n))}{\int_E (f_n^{(r)})^2\,{\rm d}m}
&\le \frac{m(K_{\rho_r}(R_n))}{(e^{\alpha R_n/2}-1)^2 m(K_{\rho_r}(R_n/2))}
\le \frac{e^{(\mu_r+\varepsilon/2 ) R_n}}{(e^{\alpha R_n/2}-1)^2e^{(\mu_r-\varepsilon ) R_n/2}}\\
&=\frac{e^{\alpha R_n}}{(e^{\alpha R_n/2}-1)^2}e^{-(\alpha-\mu_r/2-\varepsilon)R_n}\rightarrow 0 \quad (n\rightarrow\infty).
\end{split}
\end{equation}
Since the Cauchy-Schwarz inequality yields 
\[
\int_E f_n^{(r)} \,{\rm d}m
= \int_E f_n^{(r)}{\bf 1}_{K_{\rho_r}(R_n)} \,{\rm d}m
\le \left(\int_E (f_n^{(r)})^2 \,{\rm d}m\right)^{1/2} m(K_{\rho_r}(R_n))^{1/2},
\]
we get by \eqref{eq:compare-1},
\[
\frac{\int_E f_n^{(r)} \,{\rm d}m}{\int_E (f_n^{(r)})^2\,{\rm d}m}
\le \frac{m(K_{\rho_r}(R_n))^{1/2}}{\left(\int_E (f_n^{(r)})^2 \,{\rm d}m\right)^{1/2}}
\rightarrow 0, \quad n\rightarrow\infty.
\]
Combining this with \eqref{eq:decomp-g}, 
we complete the proof. 
\end{proof}

\begin{proof}[Proof of Theorem {\rm \ref{thm:ess}}]
By Lemma \ref{lem:local} (ii), Lemma \ref{lem:upper-bound} (ii) and Lemma \ref{lem:compare},
\[
\lambda_e \le \liminf_{n\rightarrow\infty} \frac{ {\cal E}(f_n^{(r)},f_n^{(r)}) }{ \int_E (f_n^{(r)})^2\,{\rm d}m }
\le \alpha^2 M_1(r) \left( \lim_{n\rightarrow\infty}\frac{ \int_E (g_n^{(r)})^2\,{\rm d}m }{ \int_E (f_n^{(r)})^2\,{\rm d}m } \right) + 2 M_2(r) 
=\alpha^2 M_1(r)+2M_2(r).
\]
Since $r>0$ and $\alpha>\mu_r /2$ are arbitrary, we arrive at the desired assertion.
\end{proof}

\subsection{Finite volume}\label{subsect:finite}
In this subsection, we assume that $m(E)<\infty$. 
For $r>0$, let 
\[
\nu_r=\liminf_{R \rightarrow \infty}\frac{-1}{R}\log m(K_{\rho_r}(R)^c).
\]

\begin{thm}\label{thm:ess-f}
Assume that $\nu_r<\infty$ for some $r>0$. 
If $({\cal E},{\cal F})$ is recurrent, then 
\[
\lambda_e \le \inf_{r>0} \left( \frac{\nu_r^2}{4} M_1(r)+2M_2(r) \right).
\]
\end{thm}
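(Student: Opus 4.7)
The plan is to mirror the proof of Theorem \ref{thm:ess}, but to flip the role of the cutoff so that the test functions are supported outside a large compact core (where the measure is small) rather than on a set exhausting $E$. The recurrence hypothesis will enter crucially in placing such functions in ${\cal F}$ despite their lack of compact support.

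I would fix $r>0$ with $\nu_r<\infty$ and $\alpha>\nu_r/2$, and choose a sequence $\tilde R_n\to\infty$ along which the liminf defining $\nu_r$ is attained, so that $m(K_{\rho_r}(\tilde R_n)^c)=e^{-\nu_r\tilde R_n+o(\tilde R_n)}$. I would split $\tilde R_n=R_n+L_n$ with $R_n\to\infty$ but $L_n/R_n\to\infty$ (for instance $R_n=\tilde R_n/n$, $L_n=\tilde R_n(1-1/n)$) and set
\[
w_n(t)=0\vee(\alpha(t-R_n))\wedge(\alpha L_n),\qquad h_n^{(r)}=w_n(\rho_r),\qquad f_n^{(r)}=e^{h_n^{(r)}}-1,
\]
so that $f_n^{(r)}$ vanishes on $K_{\rho_r}(R_n)$ and saturates at $e^{\alpha L_n}-1$ on $K_{\rho_r}(\tilde R_n)^c$. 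To place $f_n^{(r)}\in{\cal F}$ I would write
\[
f_n^{(r)}=(e^{\alpha L_n}-1)\cdot 1-(e^{\alpha L_n}-e^{h_n^{(r)}}),
\]
observing that the second summand is compactly supported on $K_{\rho_r}(\tilde R_n)$ and belongs to ${\cal F}\cap C_c(E)$ by the contraction argument in the proof of Lemma \ref{lem:local}(i), while the first summand belongs to ${\cal F}$ because recurrence together with $m(E)<\infty$ forces $1\in{\cal F}$.

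Weak convergence $f_n^{(r)}/\|f_n^{(r)}\|_{L^2(E;m)}\rightharpoonup 0$ is then immediate from $|\int u f_n^{(r)}\,{\rm d}m|\le(\int_{K_{\rho_r}(R_n)^c}u^2\,{\rm d}m)^{1/2}\|f_n^{(r)}\|_{L^2(E;m)}$ and $m(K_{\rho_r}(R_n)^c)\to 0$. With $g_n^{(r)}=(f_n^{(r)}+2){\bf 1}_{K_{\rho_r}(R_n)^c}$, the analogue of Lemma \ref{lem:upper-bound} transplants; the only nontrivial case of the pointwise bound, $\rho_r(y)\le R_n<\rho_r(x)$, reduces via the monotonicity of $\varphi$ to the elementary inequality $(\sqrt y+1)^2\ge y+1$ with $y=e^{2\alpha(\rho_r(x)-R_n)}$. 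One obtains
\[
{\cal E}(f_n^{(r)},f_n^{(r)})\le \alpha^2 M_1(r)\int_E (g_n^{(r)})^2\,{\rm d}m+2M_2(r)\|f_n^{(r)}\|_{L^2(E;m)}^2,
\]
and the analogue of Lemma \ref{lem:compare}, $\int(g_n^{(r)})^2\,{\rm d}m\sim\|f_n^{(r)}\|_{L^2(E;m)}^2$, follows from $\|f_n^{(r)}\|_{L^2(E;m)}^2\gtrsim e^{(2\alpha-\nu_r)L_n-\nu_r R_n}$ combined with $m(K_{\rho_r}(R_n)^c)\le e^{-(\nu_r-\varepsilon)R_n}$: here $L_n/R_n\to\infty$ and $2\alpha>\nu_r$ make the ratio $m(K_{\rho_r}(R_n)^c)/\|f_n^{(r)}\|_{L^2(E;m)}^2$ vanish. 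Proposition \ref{prop:persson} then gives $\lambda_e\le\alpha^2 M_1(r)+2M_2(r)$, and passing to the infimum over $\alpha>\nu_r/2$ and over $r>0$ yields the claimed bound.

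The principal obstacle I anticipate is the ${\cal F}$-membership step: unlike in the infinite-volume case, $f_n^{(r)}$ is not compactly supported, so the direct contraction argument does not apply and recurrence must be invoked to enlist constants in ${\cal F}$. A secondary delicate point is the two-scale choice of $(R_n,L_n)$: $R_n$ must grow to secure weak convergence, yet must be dominated by $L_n$ so that the exponential gain $e^{2\alpha L_n}$ in $\|f_n^{(r)}\|_{L^2(E;m)}^2$ can overcome the upper bound on $m(K_{\rho_r}(R_n)^c)$ for $\alpha$ arbitrarily close to $\nu_r/2$.
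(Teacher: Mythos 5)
Your proof follows the same overall architecture as the paper's: exponential test functions saturating outside a growing compact core, recurrence invoked so that constants lie in $\mathcal{F}$ and the non-compactly-supported $f_n^{(r)}$ can be placed in $\mathcal{F}$, the pointwise bound via the function $\varphi$, and the $L^2$-norm comparison of $f_n^{(r)}$ and $g_n^{(r)}$ closed against the definition of $\nu_r$. The only genuine deviation is your two-parameter cutoff $(R_n, L_n)$ with $L_n/R_n\to\infty$; the paper simply takes the inner and outer cutoffs to be $R_n/2$ and $R_n$ where $\{R_n\}$ realizes the liminf defining $\nu_r$, which is the special case $L_n=R_n$ of your scheme (replace your $\tilde R_n$ with the paper's $R_n$). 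This suffices: repeating your exponent bookkeeping with $L_n=R_n$ gives a ratio of order $e^{(\nu_r+2\varepsilon-2\alpha)R_n}\to 0$ whenever $\varepsilon<\alpha-\nu_r/2$, so the condition $L_n/R_n\to\infty$ buys nothing here. Two small points worth flagging. First, your illustrative choice $R_n=\tilde R_n/n$ does not in general ensure $R_n\to\infty$ (take $\tilde R_n=\log n$), which is needed both for $m(K_{\rho_r}(R_n)^c)\to 0$ and for $\int_{K_{\rho_r}(R_n)^c}u^2\,\mathrm{d}m\to 0$; a choice such as $R_n=\sqrt{\tilde R_n}$, or simply $R_n=\tilde R_n/2$, repairs this. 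Second, your $\mathcal{F}$-membership argument via $f_n^{(r)}=(e^{\alpha L_n}-1)\cdot 1-(e^{\alpha L_n}-e^{h_n^{(r)}})$ is correct but a touch indirect: the compactly supported piece is best seen as $e^{\alpha L_n}(1-e^{-k})$ with $k=\alpha L_n-h_n^{(r)}\in\mathcal{F}\cap C_0(E)$ by the argument of [S15, Lemma 3.1], whereas the paper shows $h_n^{(r)}\in\mathcal{F}$ directly (constant minus compactly supported contraction of $\rho_r$) and then applies the contraction argument of Lemma 3.2 to $e^{h_n^{(r)}}-1$. Both routes are valid. Your reduction of the pointwise inequality to $(\sqrt{y}+1)^2\ge y+1$ in the case $\rho_r(y)\le R_n<\rho_r(x)$ is correct but glosses over the remaining nontrivial case where both points lie in the ramp or beyond, which the paper's Lemma 3.7 treats in full; these cases go through by the same monotonicity of $\varphi$, so no real gap, but the phrase ``the only nontrivial case'' is an overstatement.
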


We note that if $m(E)<\infty$ and $({\cal E},{\cal F})$ is recurrent, 
then any constant function belongs to ${\cal F}$ by \cite[Theorem 1.6.3 and Theorem 1.5.2 (iii)]{FOT11}.

Let us prove Theorem \ref{thm:ess-f} 
by following the arguments of \cite{B84, HKW13} and Theorem \ref{thm:ess}. 
In what follows, we assume that  $\nu_r<\infty$ for some $r>0$. 
Then there exists a sequence $\{R_n\}$ such that $R_n \rightarrow \infty$ as $n \rightarrow \infty$ and
\[
\nu_r=\lim_{n\rightarrow\infty}\frac{-1}{R_n}\log m(K_{\rho_r}(R_n)^c).
\]
We fix such a sequence $\{R_n\}$.

For any $\alpha>\nu_r/2$, let $w_n \ (n\ge 1)$ be a continuous function on $[0,\infty)$ such that 
\[
w_n(t)=
\begin{dcases}
0, & 0 \le t \le R_n/2, \\
\alpha(t-R_n/2), & R_n/2 < t \le R_n, \\
\alpha R_n/2, & t > R_n. 
\end{dcases}
\]
Define $h_n^{(r)}(x)=w_n(\rho_r(x))$ and $f_n^{(r)}(x)=e^{h_n^{(r)}(x)}-1$. 
We first prove that 
we can take the sequence $\{f_n^{(r)}/\|f_n^{(r)}\|_{L^2(E;m)}\}$ 
as $\{f_n\}$ in  Proposition \ref{prop:persson}.

\begin{lem}\label{lem:local-f}
Assume that $({\cal E},{\cal F})$ is recurrent. 
\begin{enumerate}
\item[{\rm (i)}] For any $n\ge 1$, $h_n^{(r)}$ and $f_n^{(r)}$ belong to ${\cal F}$.
\item[{\rm (ii)}] 
The sequence $\{ f_n^{(r)}/\|f_n^{(r)}\|_{L^2(E;m)} \}$ is weakly convergent to $0$ in $L^2(E;m)$. 
In particular, 
\begin{equation}\label{eq:persson-2}
\lambda_e \le \liminf_{n\rightarrow\infty}\frac{{\cal E}(f_n^{(r)},f_n^{(r)})}{\|f_n^{(r)}\|_{L^2(E;m)}^2}.
\end{equation}
\end{enumerate}
\end{lem}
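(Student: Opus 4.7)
The plan is to adapt the proof of Lemma \ref{lem:local} with two modifications: use recurrence to bring constant functions into $\mathcal{F}$ (compensating for the loss of compact support of $w_n$), and use $m(E)<\infty$ together with $K_{\rho_r}(R)\nearrow E$ to prove weak convergence to $0$ directly from the fact that $f_n^{(r)}$ is supported on shrinking tails.

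For (i), I will decompose $w_n(t)=\alpha R_n/2-\tilde w_n(t)$, where $\tilde w_n$ equals $\alpha R_n/2$ on $[0,R_n/2]$, decreases linearly to $0$ on $[R_n/2,R_n]$, and vanishes on $[R_n,\infty)$. Since $\tilde w_n$ is bounded, Lipschitz, and compactly supported, the argument of Lemma \ref{lem:local}(i) (namely \cite[Lemma 3.1]{S15}) shows $\tilde h_n^{(r)}:=\tilde w_n(\rho_r)\in\mathcal{F}\cap C_0(E)$. By recurrence and $m(E)<\infty$, constants belong to $\mathcal{F}$ via \cite[Theorem 1.6.3 and Theorem 1.5.2 (iii)]{FOT11}, so $h_n^{(r)}=\alpha R_n/2-\tilde h_n^{(r)}\in\mathcal{F}$. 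The same pointwise bounds
\[
|f_n^{(r)}(x)-f_n^{(r)}(y)|\le e^{\alpha R_n/2}|h_n^{(r)}(x)-h_n^{(r)}(y)|,\qquad |f_n^{(r)}(x)|\le e^{\alpha R_n/2}h_n^{(r)}(x)
\]
used in the infinite volume case, combined with \cite[p.~5]{FOT11}, then yield $f_n^{(r)}\in\mathcal{F}$.

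For (ii), the crucial point is that $f_n^{(r)}\equiv 0$ on $K_{\rho_r}(R_n/2)$. Given $u\in L^2(E;m)$, Cauchy--Schwarz gives
\[
\left|\int_E u f_n^{(r)}\,\d m\right|
=\left|\int_{K_{\rho_r}(R_n/2)^c} u f_n^{(r)}\,\d m\right|
\le \|u\mathbf{1}_{K_{\rho_r}(R_n/2)^c}\|_{L^2(E;m)}\,\|f_n^{(r)}\|_{L^2(E;m)}.
\]
Assumption \ref{assum:compact} together with $m(E)<\infty$ yields $m(K_{\rho_r}(R_n/2)^c)\to 0$, so dominated convergence forces $\|u\mathbf{1}_{K_{\rho_r}(R_n/2)^c}\|_{L^2(E;m)}\to 0$. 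The finiteness of $\nu_r$ guarantees $m(K_{\rho_r}(R_n)^c)>0$, hence $f_n^{(r)}>0$ on a set of positive measure and $\|f_n^{(r)}\|_{L^2(E;m)}>0$. Dividing through shows that $\{f_n^{(r)}/\|f_n^{(r)}\|_{L^2(E;m)}\}$ converges weakly to $0$, and Proposition \ref{prop:persson} delivers \eqref{eq:persson-2}.

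The main obstacle is (i): without compact support for $w_n$, the argument of Lemma \ref{lem:local}(i) does not apply verbatim. The recurrence hypothesis is precisely what supplies constants to $\mathcal{F}$, and the additive decomposition $h_n^{(r)}=\alpha R_n/2-\tilde h_n^{(r)}$ repairs the argument. Part (ii) will actually be cleaner than in the infinite volume case, because the vanishing set of $f_n^{(r)}$ now grows to cover $E$ in measure, making the weak convergence essentially automatic and removing the need for a detailed size estimate of $\|f_n^{(r)}\|_{L^2(E;m)}$.
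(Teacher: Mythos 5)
Your proposal is correct and follows essentially the same approach as the paper: the paper likewise writes $h_n^{(r)}$ as a constant minus the compactly supported cut-off $\alpha R_n/2-h_n^{(r)}\in\mathcal{F}\cap C_0(E)$ (your $\tilde h_n^{(r)}$), invokes recurrence for constants, and uses the normal contraction bounds on $f_n^{(r)}$; for (ii) the paper also exploits $f_n^{(r)}\equiv 0$ on $K_{\rho_r}(R_n/2)$ together with Cauchy--Schwarz, the only cosmetic difference being that the paper fixes $R_*$ first rather than letting the vanishing set grow with $n$.
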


\begin{proof}
We first prove (i). 
Since $\rho_r\in {\cal F}_{{\rm loc}}$, 
we can follow the proof of \cite[Lemma 3.1]{S15} to show that $\alpha R_n/2 - h_n^{(r)}\in {\cal F}\cap C_0(E)$. 
As any constant function belongs to ${\cal F}$ by assumption,  
we have $h_n^{(r)}\in {\cal F}$.
Following the proof of Lemma \ref{lem:local}, 
we further obtain $f_n^{(r)}\in {\cal F} \cap C_0(E)$.

We next prove (ii). 
Take any $u\in L^2(E;m)$ with $u\ne 0$. 
Then for any $R>0$,  we have by the Cauchy-Schwarz inequality, 
\begin{equation}\label{eq:schwarz-f}
\begin{split}
\left|\int_E u f_n^{(r)} \,{\rm d}m\right|
&=\left|\int_E u f_n^{(r)} {\bf 1}_{K_{\rho_r}(R)} \, {\rm d}m + \int_E u f_n^{(r)} {\bf 1}_{K_{\rho_r}(R)^c} \, {\rm d}m\right|\\
&\le \left|\int_E u f_n^{(r)} {\bf 1}_{K_{\rho_r}(R)} \, {\rm d}m \right| + \left| \int_E u f_n^{(r)} {\bf 1}_{K_{\rho_r}(R)^c} \, {\rm d}m \right|\\
&\le \|u\|_{L^2(E;m)} \left(\int_E (f_n^{(r)})^2{\bf 1}_{K_{\rho_r}(R)}\,{\rm d}m\right)^{1/2}
+ \|f_n^{(r)}\|_{L^2(E;m)}\left(\int_E u^2{\bf 1}_{K_{\rho_r}(R)^c}\,{\rm d}m\right)^{1/2}. 
\end{split}
\end{equation}
On the other hand, for any $\varepsilon>0$, there exists $R_*>0$ such that 
\[
\left(\int_E u^2{\bf 1}_{K_{\rho_r}(R_*)^c}\,{\rm d}m \right)^{1/2} < \varepsilon. 
\]
In particular, for any $n\in {\mathbb N}$ with $R_n\ge 2 R_*$, 
we have $f_n^{(r)}{\bf 1}_{K_{\rho_r}(R_*)}=0$ and so 
\[
\int_E (f_n^{(r)})^2{\bf 1}_{K_{\rho_r}(R_*)}\,{\rm d}m=0.
\]
Hence if we take $R=R_*$ in \eqref{eq:schwarz-f}, then 
\[
\left| \int_E u f_n^{(r)} \,{\rm d}m \right|
\le \|f_n^{(r)}\|_{L^2(E;m)} \left( \int_E u^2{\bf 1}_{K_{\rho_r}(R_*)^c}\,{\rm d}m \right)^{1/2}
 <\varepsilon \|f_n^{(r)}\|_{L^2(E;m)}.
\]
Namely, $\{ f_n^{(r)}/\|f_n^{(r)}\|_{L^2(E;m)} \}$ is weakly convergent to $0$ in $L^2(E;m)$. 
Combining this with Proposition \ref{prop:persson}, we further get \eqref{eq:persson-2}.
\end{proof}

We next discuss the upper bound of ${\cal E}(f_n^{(r)},f_n^{(r)})$. 
Let $g_n^{(r)}=(f_n^{(r)}+2){\bf 1}_{K_{\rho_r}(R_n/2)^c}$, 
and let $\varphi$ be as in \eqref{eq:varphi}. 
Following the proof of \cite[Lemma 2.5]{HKW13}, we obtain  
\begin{lem}\label{lem:f-bound-f}
\begin{enumerate}
\item[{\rm (i)}] 
For any $n\ge 1$, 
\begin{equation}\label{eq:f-bound-f}
(f_n^{(r)}(x)-f_n^{(r)}(y))^2
\le \varphi(|\rho_r(x)-\rho_r(y)|) (g_n^{(r)}(x)^2+g_n^{(r)}(y)^2), \quad x,y\in E.
\end{equation}
In particular, 
\[
(f_n^{(r)}(x)-f_n^{(r)}(y))^2
\le \frac{\alpha^2}{2}(\rho_r(x)-\rho_r(y))^2 (g_n^{(r)}(x)^2+g_n^{(r)}(y)^2), \quad x,y\in E.
\]
\item[{\rm (ii)}] For any $n\ge 1$, 
\[
{\cal E}(f_n^{(r)},f_n^{(r)})
\le \alpha^2 M_1(r) \int_E (g_n^{(r)})^2\,{\rm d}m+ 2M_2(r) \int_E (f_n^{(r)})^2\,{\rm d}m.
\]
\end{enumerate}
\end{lem}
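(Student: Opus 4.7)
The plan is to mirror the proof of Lemma \ref{lem:upper-bound}, noting that the cutoff $w_n$ here is dual to the infinite-volume one: it vanishes on $K_{\rho_r}(R_n/2)$ and saturates at $\alpha R_n/2$ outside $K_{\rho_r}(R_n)$. Correspondingly, $g_n^{(r)}$ now carries the indicator ${\bf 1}_{K_{\rho_r}(R_n/2)^c}$, which records exactly where $f_n^{(r)}$ can be nonzero. With this single adjustment, the algebraic identities and the energy decomposition used in Lemma \ref{lem:upper-bound} go through unchanged.

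For part (i), I would transcribe the argument of \cite[Lemma 2.5]{HKW13}. The only point requiring verification is that whenever $w_n(\rho_r(x)) \neq w_n(\rho_r(y))$, at least one of the two points (say $x$) satisfies $\rho_r(x) > R_n/2$, and then $g_n^{(r)}(x) = e^{w_n(\rho_r(x))} + 1 \geq e^{w_n(\rho_r(x))}$. This domination is precisely what the HKW13 algebra requires in order to conclude \eqref{eq:f-bound-f}, and the quadratic consequence is immediate from $\varphi(t) \leq \alpha^2 t^2/2$.

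For part (ii), I would decompose ${\cal E}(f_n^{(r)}, f_n^{(r)})$ as in \eqref{eq:decomp} into the local, relatively small-jump, and relatively big-jump pieces, and estimate each as in Lemma \ref{lem:upper-bound}. For the local part, the chain rule together with the contraction property of $\Gamma^{(c)}$ yields
\[
{\cal E}^{(c)}(f_n^{(r)}, f_n^{(r)}) = \int_E e^{2 w_n(\rho_r)}\,\Gamma^{(c)}(w_n(\rho_r))\,{\rm d}m \le \alpha^2 \int_E (g_n^{(r)})^2\,\Gamma^{(c)}(\rho_r)\,{\rm d}m,
\]
since $\Gamma^{(c)}(w_n(\rho_r))$ is concentrated on $\{R_n/2 < \rho_r < R_n\}$, where $g_n^{(r)} = e^{w_n(\rho_r)} + 1 \geq e^{w_n(\rho_r)}$. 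For the small-jump piece, (i) in its quadratic form, combined with the symmetry of $F_r$ and of $J({\rm d}x,{\rm d}y)m({\rm d}x)$, transfers the double integral onto $g_n^{(r)}(x)^2$ and extracts the $\esssup$ appearing in $M_1(r)$; this yields the $\alpha^2 M_1(r) \int_E (g_n^{(r)})^2\,{\rm d}m$ contribution. For the big-jump piece, the bound $(a-b)^2 \leq 2(a^2+b^2)$ and the same symmetry argument, exactly as in \eqref{eq:big-jump-bound}, give $2M_2(r)\int_E (f_n^{(r)})^2\,{\rm d}m$.

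Since this is a line-by-line adaptation of the infinite-volume proof, there is no real conceptual obstacle. The only bookkeeping point is confirming that the energy measure $\Gamma^{(c)}(w_n(\rho_r))$ is supported where $g_n^{(r)}$ dominates $e^{w_n(\rho_r)}$; with the new definition of $g_n^{(r)}$ this is automatic. The rest is mechanical verification that each symmetry and $\esssup$ step from Lemma \ref{lem:upper-bound} remains valid under the modified choice of cutoff.
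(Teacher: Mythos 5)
Your proposal is essentially the same approach as the paper. For (ii) the paper literally says the proof is identical to Lemma \ref{lem:upper-bound}(ii), and your transcription of the local/small-jump/big-jump decomposition, the chain-rule and contraction step for $\Gamma^{(c)}$, the symmetry of $J(\mathrm{d}x,\mathrm{d}y)m(\mathrm{d}x)$ and of $F_r$, and the $(a-b)^2\le 2(a^2+b^2)$ bound for the big-jump piece are exactly what is used. For (i) the paper does not simply cite \cite[Lemma~2.5]{HKW13} (it cannot, since the cutoff $w_n$ is now increasing rather than decreasing) but instead writes out the five cases in terms of $R_n/2$ and $R_n$; your "transcription" is in reality this adaptation of the HKW13 algebra, and your key observation—that the term $g_n^{(r)}(x)^2+g_n^{(r)}(y)^2$ is positive precisely where the increments of $f_n^{(r)}$ can be nonzero, because the indicator ${\bf 1}_{K_{\rho_r}(R_n/2)^c}$ flags the region where $w_n\circ\rho_r$ is nonconstant—is exactly the bookkeeping fact that makes the case analysis close. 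The only small caveat, which you should keep in mind when actually writing out the cases, is that the domination $g_n^{(r)}=e^{w_n(\rho_r)}+1$ holds only on $K_{\rho_r}(R_n/2)^c$, while $g_n^{(r)}=0$ on $K_{\rho_r}(R_n/2)$; in the mixed cases one must verify that the single surviving term $g_n^{(r)}(y)^2$ already dominates the quantity $1+e^{2\alpha(\rho_r(y)-R_n/2)}$ (or its analogue), which it does since $(e^{\alpha t}+1)^2\ge e^{2\alpha t}+1$.
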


\begin{proof}
We first prove (i). By symmetry, we may and do assume that $\rho_r(x)\le \rho_r(y)$. 
\begin{enumerate}

\item[(a)] Assume that $\rho_r(x)\le \rho_r(y)\le R_n/2$ or $R_n\le \rho_r(x)\le \rho_r(y)$. 
Then by definition, we have $f_n^{(r)}(x)=f_n^{(r)}(y)$ and so \eqref{eq:f-bound-f} follows. 

\item[(b)] Assume that $\rho_r(x)\le R_n/2 \le \rho_r(y) \le R_n$. 
Then 
\begin{equation*}
\begin{split}
\left(f_n^{(r)}(x)-f_n^{(r)}(y)\right)^2
&=(e^{\alpha (\rho_r(y) - R_n/2)}-1)^2 \\
&=\frac{(e^{\alpha (\rho_r(y) - R_n/2)}-1)^2}{1 + e^{ 2 \alpha ( \rho_r(y) - R_n/2)}}
 (1 + e^{ 2 \alpha (\rho_r(y) - R_n/2)}).
\end{split}
\end{equation*}
Since 
\[
\frac{(e^{\alpha (\rho_r(y) - R_n/2)}-1)^2}{1 + e^{ 2 \alpha ( \rho_r(y) - R_n/2)}}
=\varphi(\rho_r(y)-R_n/2) \le \varphi(\rho_r(y)-\rho_r(x))
\]
and 
\[
1 + e^{ 2 \alpha (\rho_r(y) - R_n/2)} \le  g_n^{(r)}(x)^2+g_n^{(r)}(y)^2,
\]
we obtain \eqref{eq:f-bound-f}.

\item[(c)] 
Assume that $R_n/2 \le \rho_r(x)\le \rho_r(y)\le R_n$. Then 
\begin{equation*}
\begin{split}
\left(f_n^{(r)}(x)-f_n^{(r)}(y)\right)^2
&=(e^{ \alpha (\rho_r(x) - R_n/2)} - e^{\alpha (\rho_r(y) - R_n/2)})^2\\
&=\frac{(e^{ \alpha (\rho_r(x) - R_n/2)} - e^{\alpha (\rho_r(y) - R_n/2)})^2}
{e^{2 \alpha ( \rho_r(x) - R_n/2 )} + e^{2 \alpha ( \rho_r(y) - R_n/2 )}}
(e^{2 \alpha ( \rho_r(x) p- R_n/2 )} + e^{2 \alpha ( \rho_r(y) - R_n/2 )}) \\
&=\frac{(1 - e^{\alpha (\rho_r(y) - \rho_r(x))})^2}{1+e^{2 \alpha ( \rho_r(y) - \rho_r(x) )}} 
(e^{2 \alpha ( \rho_r(x) - R_n/2 )} + e^{2 \alpha ( \rho_r(y) - R_n/2)})\\
&\le \varphi(\rho_r(y)-\rho_r(x)) (g_n^{(r)}(x)^2+g_n^{(r)}(y)^2).
\end{split}
\end{equation*}

\item[(d)]
Assume  that $\rho_r(x)\le R_n/2 \le R_n \le \rho_r(y)$. 
Then 
\[
\left(f_n^{(r)}(x)-f_n^{(r)}(y)\right)^2
=(e^{\alpha R_n/2}-1)^2
=\frac{(e^{\alpha R_n/2}-1)^2}{e^{\alpha R_n }+1}(e^{\alpha R_n}+1)=\varphi(R_n/2)(e^{\alpha R_n}+1).
\]
Since $R_n/2 \le \rho_r(y)-\rho_r(x)$ and $e^{\alpha R_n}+1\le g_n^{(r)}(x)^2+g_n^{(r)}(y)^2$, 
we have \eqref{eq:f-bound-f}.

\item[(e)] Assume that $R_n/2 \le \rho_r(x) \le R_n \le \rho_r(y)$. 
Then 
\begin{equation*}
\begin{split}
\left(f_n^{(r)}(x)-f_n^{(r)}(y)\right)^2
&=(e^{\alpha ( \rho_r(x) - R_n/2)}-e^{ \alpha R_n/2 })^2 \\
&=\frac{(e^{\alpha ( \rho_r(x) - R_n/2)}-e^{ \alpha R_n/2 })^2}{ e^{\alpha R_n}+e^{2 \alpha ( \rho_r(x) - R_n/2 )}}
(e^{\alpha R_n}+e^{2 \alpha ( \rho_r(x) - R_n/2 )})\\
&=\frac{(e^{\alpha ( \rho_r(x) - R_n)}-1)^2}{ 1+e^{2 \alpha ( \rho_r(x) - R_n )} }
(e^{\alpha R_n}+e^{2 \alpha ( \rho_r(x) - R_n/2 )}).
\end{split}
\end{equation*}
Since 
\[
\frac{(e^{\alpha ( \rho_r(x) - R_n)}- 1)^2}{ 1+e^{2 \alpha ( \rho_r(x) - R_n )} }
= \varphi( \rho_r(x) - R_n) = \varphi( R_n - \rho_r(x)) \le \varphi ( \rho_r(y) - \rho_r(x)) 
\]
and 
\[
e^{\alpha R_n}+e^{2 \alpha ( \rho_r(x) - R_n/2 )} 
\le g_n^{(r)}(x)^2+g_n^{(r)}(y)^2, 
\]
we get \eqref{eq:f-bound-f}.
\end{enumerate}
By the argument above, the proof of (i) is complete.

We omit the proof of (ii) because it is the same with that of Lemma \ref{lem:upper-bound} (ii). 
\end{proof}

We finally prove the asymptotic equivalence of the norms of $f_n^{(r)}$ and $g_n^{(r)}$.
\begin{lem}\label{lem:compare-f}
The sequences $\{f_n^{(r)}\}$ and $\{g_n^{(r)}\}$ satisfy 
$\|f_n^{(r)}\|_{L^2(E;m)}/\|g_n^{(r)}\|_{L^2(E;m)}\rightarrow 1$ as $n\rightarrow \infty$.
\end{lem}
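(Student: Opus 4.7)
The strategy mirrors that of Lemma \ref{lem:compare}, but the sets are reversed: since $w_n$ vanishes on $[0,R_n/2]$ and equals $\alpha R_n/2$ on $[R_n,\infty)$, the function $f_n^{(r)}$ is supported on $K_{\rho_r}(R_n/2)^c$ and saturates at $e^{\alpha R_n/2}-1$ on $K_{\rho_r}(R_n)^c$. Using $f_n^{(r)} = f_n^{(r)}{\bf 1}_{K_{\rho_r}(R_n/2)^c}$, I would first rewrite
\[
g_n^{(r)} = f_n^{(r)} + 2\,{\bf 1}_{K_{\rho_r}(R_n/2)^c},
\]
and expand
\[
\int_E (g_n^{(r)})^2\,\d m = \int_E (f_n^{(r)})^2\,\d m + 4\int_E f_n^{(r)}\,\d m + 4\, m(K_{\rho_r}(R_n/2)^c).
\]
It then suffices to show that both $m(K_{\rho_r}(R_n/2)^c)$ and $\int_E f_n^{(r)}\,\d m$ are negligible compared with $\|f_n^{(r)}\|_{L^2(E;m)}^2$.

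Next I would produce a lower bound for $\|f_n^{(r)}\|_{L^2(E;m)}^2$ by restricting the integral to $\{\rho_r > R_n\}$, where $f_n^{(r)}\equiv e^{\alpha R_n/2}-1$, giving
\[
\int_E (f_n^{(r)})^2\,\d m \ge (e^{\alpha R_n/2}-1)^2\, m(K_{\rho_r}(R_n)^c).
\]
From the definition of $\nu_r$, for any $\varepsilon>0$ there is $R_*>0$ with $m(K_{\rho_r}(R)^c) \le e^{-(\nu_r-\varepsilon)R}$ for $R\ge R_*$, and from the chosen sequence $\{R_n\}$ one has $m(K_{\rho_r}(R_n)^c) \ge e^{-(\nu_r+\varepsilon/2)R_n}$ for $n$ large. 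Combining these, for $R_n/2\ge R_*$,
\[
\frac{m(K_{\rho_r}(R_n/2)^c)}{\int_E (f_n^{(r)})^2\,\d m}
\le \frac{e^{-(\nu_r-\varepsilon)R_n/2}}{(e^{\alpha R_n/2}-1)^2\, e^{-(\nu_r+\varepsilon/2)R_n}}
= \frac{e^{(\nu_r/2+\varepsilon)R_n}}{(e^{\alpha R_n/2}-1)^2}.
\]
Choosing $\varepsilon\in(0,\alpha-\nu_r/2)$ makes the right-hand side tend to $0$ as $n\to\infty$.

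Finally I would handle $\int_E f_n^{(r)}\,\d m$ by Cauchy-Schwarz on the supporting set:
\[
\int_E f_n^{(r)}\,\d m = \int_E f_n^{(r)}{\bf 1}_{K_{\rho_r}(R_n/2)^c}\,\d m
\le \Bigl(\int_E (f_n^{(r)})^2\,\d m\Bigr)^{1/2} m(K_{\rho_r}(R_n/2)^c)^{1/2},
\]
so the previous step yields $\int_E f_n^{(r)}\,\d m/\int_E (f_n^{(r)})^2\,\d m \to 0$ as well. Plugging both estimates into the expansion of $\int_E (g_n^{(r)})^2\,\d m$ gives $\|g_n^{(r)}\|_{L^2(E;m)}^2/\|f_n^{(r)}\|_{L^2(E;m)}^2\to 1$, which is equivalent to the claim. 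The only delicate point is the bookkeeping of the two exponents $\nu_r-\varepsilon$ and $\nu_r+\varepsilon/2$, which must be arranged so that the combined rate $\nu_r/2+\varepsilon$ stays strictly below $\alpha$; everything else is routine once the identity $g_n^{(r)}=f_n^{(r)}+2\,{\bf 1}_{K_{\rho_r}(R_n/2)^c}$ is observed.
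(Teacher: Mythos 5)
Your proposal is correct and follows essentially the same route as the paper's proof: the same expansion of $\|g_n^{(r)}\|_{L^2}^2$, the same lower bound on $\|f_n^{(r)}\|_{L^2}^2$ from $\{\rho_r>R_n\}$, the same two-sided bound on $m(K_{\rho_r}(\cdot)^c)$ derived from $\nu_r$ and the sequence $\{R_n\}$, and the same Cauchy--Schwarz step for the cross term. The choice $\varepsilon\in(0,\alpha-\nu_r/2)$ and the resulting decay rate also match; there is nothing to add.
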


\begin{proof}
By the definitions of $\{f_n^{(r)}\}$ and $\{g_n^{(r)}\}$, 
\begin{equation}\label{eq:decomp-g-f}
\int_E (g_n^{(r)})^2\,{\rm d}m
=\int_E (f_n^{(r)})^2\,{\rm d}m+4\int_E f_n^{(r)} \,{\rm d}m+4m(K_{\rho_r}(R_n/2)^c).
\end{equation}
Then
\[
\int_E (f_n^{(r)})^2\,{\rm d}m
\ge \int_{\rho_r(x)>R_n} (f_n^{(r)})^2\,{\rm d}m=(e^{\alpha R_n /2}-1)^2m(K_{\rho_r}(R_n)^c).
\]
By the definition of $\nu_r$, we see that for any $\varepsilon>0$, 
there exists $R_*>0$ such that 
\[
m(K_{\rho_r}(R)^c) \le e^{-(\nu_r-\varepsilon )R}, \quad R\ge R_*.
\]
By the definition of $\{R_n\}$, we can also take $N\in {\mathbb N}$ with $R_N\ge R_*$ such that 
\[
m(K_{\rho_r}(R_n)^c) \ge e^{-( \nu_r + \varepsilon/2 ) R_n}, \quad n\ge N.
\]
Hence if we take $\varepsilon\in ( 0, \alpha-\nu_r/2 )$, then  
\begin{equation}\label{eq:compare-1-f}
\begin{split}
\frac{m(K_{\rho_r}(R_n/2)^c)}{\int_E (f_n^{(r)})^2\,{\rm d}m}
&\le \frac{m(K_{\rho_r}(R_n/2)^c)}{(e^{\alpha R_n/2}-1)^2 m(K_{\rho_r}(R_n)^c)}
\le \frac{e^{- (\nu_r - \varepsilon ) R_n/2}}{(e^{\alpha R_n/2}-1)^2 e^{- (\nu_r +  \varepsilon /2) R_n}}\\
&=\frac{e^{\alpha R_n}}{(e^{\alpha R_n/2}-1)^2} e^{-(\alpha-\nu_r/2-\varepsilon )R_n}\rightarrow 0, \quad n\rightarrow\infty.
\end{split}
\end{equation}
Since the Cauchy-Schwarz inequality yields 
\[
\int_E f_n^{(r)} \,{\rm d}m
= \int_E f_n^{(r)}{\bf 1}_{K_{\rho_r}(R_n/2)^c} \,{\rm d}m
\le \left(\int_E (f_n^{(r)})^2 \,{\rm d}m\right)^{1/2} m(K_{\rho_r}(R_n/2)^c)^{1/2},
\]
we get by \eqref{eq:compare-1-f},
\[
\frac{\int_E f_n^{(r)} \,{\rm d}m}{\int_E (f_n^{(r)})^2\,{\rm d}m}
\le \frac{m(K_{\rho_r}(R_n/2)^c)^{1/2}}{\left(\int_E (f_n^{(r)})^2 \,{\rm d}m\right)^{1/2}}
\rightarrow 0, \quad n\rightarrow\infty.
\]
Combining this with \eqref{eq:decomp-g-f} and \eqref{eq:compare-1-f}, 
we complete the proof. 
\end{proof}

\begin{proof}[Proof of Theorem {\rm \ref{thm:ess-f}}]
We can follow the proof of Theorem \ref{thm:ess}  
by using Lemma \ref{lem:local-f} (ii), Lemma \ref{lem:f-bound-f} (ii) and Lemma \ref{lem:compare-f}. 
\end{proof}

\section{Volume growth}\label{sect:volume}
In this section, we are concerned with the relation between 
the volume growth and the upper bound of the bottom of the essential spectrum. 
Let $K_x(r)=\{ y\in E \mid d(x,y)\le r\}$ be a closed ball with center $x\in E$ and radius $r\ge 0$.
Throughout this section, we impose the following assumption on the regular Dirichlet form $({\cal E},{\cal F})$. 
\begin{assum}\label{assum:length-1} \rm 
$({\cal E},{\cal F})$ is a regular Dirichlet form on $L^2(E;m)$ satisfying Assumption \ref{assum:BD} 
and the next conditions:
\begin{enumerate}
\item[(i)] There exists a positive symmetric measurable function $J(x,y)$ on $E\times E$ 
such that $J(x,{\rm d}y)=J(x,y)\,m({\rm d}y)$.
\item[(ii)] For some $o\in E$, the function $d_0(x)=d(o,x) \ (x\in E)$ belongs to ${\cal F}_{{\rm loc}}$.
\item[(iii)] For any $x\in E$ and $r>0$, the closed ball $K_x(r)$ is compact in $E$. 
\end{enumerate}
\end{assum}

\subsection{Polynomial volume growth}
In this subsection, 
we discuss the upper bound of $\lambda_e$ under the next conditions:
\begin{itemize}
\item 
The measure $m$ satisfies $m(E)=\infty$, and for some positive constants $C_1$ and $\eta$,  
\[
m(K_x(r)) \le C_1 r^{\eta}, \quad x\in E, \ r>0.
\]
\item 
There exist positive constants $C_2$, $C_3$, $\eta$, $\beta_1 \ (0<\beta_1<2)$ and $\beta_2$ such that 
for any $x,y\in E$, 
\[
J(x,y) \le 
\begin{dcases}
\frac{C_2}{d(x,y)^{\eta+\beta_1}}, & d(x,y)\le 1, \\
\frac{C_3}{d(x,y)^{\eta+\beta_2}}, & d(x,y)> 1.
\end{dcases}
\]
\end{itemize}

Take $\rho_r(x)=d_0(x)$ and $F_r(x,y)=r$, and so  $\mu_r=0$. 
We first calculate the upper bound of $M_1(r)$. 
By definition, 
\begin{equation}\label{eq:m_1-upper}
\begin{split}
M_1(r)
&=\esssup_{x\in E}\int_{0<d(x,y)\le r} (d_0(x)-d_0(y))^2 \,J(x,y)m({\rm d}y)\\
&\le \sup_{x\in E}\int_{0<d(x,y)\le r}d(x,y)^2 \,J(x,y)m({\rm d}y).
\end{split}
\end{equation}
For $r\in (0,1]$,
\[
\int_{0<d(x,y)\le r}d(x,y)^2 \,J(x,y)m({\rm d}y)
\le C_2 \int_{0<d(x,y)\le r}d(x,y)^{2-(\eta+\beta_1)}\,m({\rm d}y).
\]
Let $V_x(r)=m(K_x(r))$. Since 
\begin{equation}\label{eq:ibp-1}
\begin{split}
&\int_{0<d(x,y)\le r}d(x,y)^{2-(\eta+\beta_1)}\,m({\rm d}y)
=\int_{(0,r]} s^{2-(\eta+\beta_1)}\, {\rm d}V_x(s)\\
&=[s^{2-(\eta+\beta_1)}V_x(s)]_{s=0}^{s=r} - (2-(\eta+\beta_1))\int_0^r s^{1-(\eta+\beta_1)}V_x(s)\,{\rm d}s
\le c_1r^{2-\beta_1},
\end{split}
\end{equation}
we obtain $M_1(r)\le c_1r^{2-\beta_1}$ for $r\in (0,1]$. 
In the same way, we have $M_1(r)\le c_2 r^{2-\beta_2}$ for $r>1$. 

We next calculate the upper bound of $M_2(r)$. 
For $r>1$, we have as in \eqref{eq:ibp-1},
\begin{equation}\label{eq:ibp-2}
\int_{d(x,y) > r} J(x,y)m({\rm d}y)
\le C_2\int_{d(x,y) > r}d(x,y)^{-(\eta+\beta_2)}m({\rm d}y)
\le c_3r^{-\beta_2},
\end{equation}
which implies that $M_2(r)\le c_3 r^{-\beta_2}$. 
For $r\in (0,1]$, 
\begin{equation*}
\begin{split}
\int_{d(x,y) > r} J(x,y)m({\rm d}y)
&\le  C_1\int_{r<d(x,y) \le 1}d(x,y)^{-(\eta+\beta_1)}m({\rm d}y) + C_2\int_{d(x,y) > 1}d(x,y)^{-(\eta+\beta_2)}m({\rm d}y) \\
&\le c_4 r^{-\beta_1},
\end{split}
\end{equation*}
which implies that $M_2(r)\le c_4 r^{-\beta_1}$. 

By the argument above, we obtain
\[
\inf_{r>0}\left( \frac{\mu_r^2}{4} M_1(r) + M_2(r) \right)=0
\]
and so $\lambda_e=0$ by Theorem \ref{thm:ess}.

\subsection{Exponential volume growth}

In this subsection, 
we discuss the upper bound of $\lambda_e$ under the next conditions:
\begin{itemize}
\item The measure $m$ satisfies $m(E)=\infty$, 
and there exist positive constants $C_1$, $C_2$, $\eta$ and $\kappa$ such that for any $x\in E$,  
\[
m(K_x(r))\le 
\begin{dcases}
C_1r^{\eta}, & 0< r \le 1 , \\
C_2e^{\kappa r}, & r>1.
\end{dcases}
\]
\item There exist positive constants $C_3$, $C_4$, $\beta_1 \ (0<\beta_1<2)$, $\beta_2$ 
and $\lambda \ge \kappa$ such that 
\[
J(x,y)\le 
\begin{dcases}
\frac{C_3}{d(x,y)^{\eta+\beta_1}}, & 0<d(x,y) \le 1, \\
\frac{C_4 e^{-\lambda d(x,y)}}{d(x,y)^{\beta_2}}, & d(x,y) > 1.
\end{dcases}
\]
\end{itemize}
This formulation is the same as  \cite[Example 5.7]{GHM12},  
which is motivated by the fractional Laplacian on the hyperbolic space. 
We will explain details about this matter in Remark \ref{rem:hyperbolic} below. 

Take $\rho_r(x)=d_0(x)$ and $F_r(x,y)=r$. 
We first calculate the upper bound of $M_1(r)$. 
As in \eqref{eq:m_1-upper},  
\[
M_1(r)
\le \sup_{x\in E}\int_{0<d(x,y)\le r}d(x,y)^2 \,J(x,y)m({\rm d}y).
\]
Then by following the calculation just after \eqref{eq:m_1-upper}, 
we have $M_1(r)\le c_1 r^{2-\beta_1}$ for $r\in (0,1]$. 
Assume that $r>1$. Let
\begin{equation*}
\begin{split}
&\int_{0<d(x,y)\le r} d(x,y)^2 \,J(x,y)m({\rm d}y)\\
&= \int_{0<d(x,y)\le 1} d(x,y)^2 \,J(x,y)m({\rm d}y) 
+\int_{1<d(x,y)\le r} d(x,y)^2 \,J(x,y)m({\rm d}y)\\
&={\rm (I)}+{\rm (II)}.
\end{split}
\end{equation*}
Then by the same argument as for $r\in (0,1]$, we have ${\rm (I)}\le c_1$. 
Let $V_x(r)=m(K_x(r))$. Then 
\[
{\rm (II)} 
\le C_4  \int_{1<d(x,y)\le r} e^{-\lambda d(x,y)}d(x,y)^{2-\beta_2}\, m({\rm d}y)
=C_4\int_{(1,r]} e^{-\lambda s} s^{2-\beta_2} \, {\rm d}V_x(s).
\]
By the integration by parts formula, we obtain 
\begin{equation}\label{eq:hyp-1}
\begin{split}
\int_{(1,r]} e^{-\lambda s} s^{2-\beta_2} \, {\rm d}V_x(s)
&\le c_2 \int_1^r e^{-\lambda s} s^{2-\beta_2}V_x(s)\,{\rm d}s \\
&\lesssim 
\begin{dcases}
r^{3-\beta_2} & \text{if $\lambda=\kappa$ and  $0<\beta_2<3$}, \\
\log r & \text{if $\lambda=\kappa$ and $\beta_2=3$}, \\
1 & \text{if $\lambda>\kappa$ or $\beta_2>3$}.
\end{dcases}
\end{split}
\end{equation}
Hence for $r>1$, 
\[
M_1(r) \lesssim 
\begin{dcases}
r^{3-\beta_2} & \text{if $\lambda=\kappa$ and  $0<\beta_2<3$}, \\
\log r & \text{if $\lambda=\kappa$ and $\beta_2=3$}, \\
1 & \text{if $\lambda>\kappa$ or $\beta_2>3$}.
\end{dcases}
\]

We turn to the upper bound of $M_2(r)$. 
If $r>1$, 
then by the similar calculation as for \eqref{eq:hyp-1}, we get 
\begin{equation*}
\begin{split}
\int_{d(x,y)>r} J(x,y)\,m({\rm d}y)
&\le C_4 \int_{d(x,y)>r} \frac{e^{-\lambda d(x,y)}}{d(x,y)^{\beta_2}}\,m({\rm d}y)
= C_4 \int_{(r, \infty)} e^{-\lambda s} s^{-\beta_2}\,{\rm d}V_x(s)\\
&\lesssim  
\begin{dcases}
e^{-(\lambda-\kappa) r} r^{-\beta_2} & \text{if $\lambda>\kappa$},\\
r^{-(\beta_2-1)} & \text{if $\lambda=\kappa$ and  $\beta_2>1$}.
\end{dcases}
\end{split}
\end{equation*}
Assume that $0<r\le 1$. 
Then 
\[
\int_{d(x,y)>r} J(x,y)\,m({\rm d}y)
=\int_{r<d(x,y) \le 1} J(x,y)\,m({\rm d}y)+\int_{d(x,y)>1} J(x,y)\,m({\rm d}y).
\]
By the calculation as for $r>1$, the second term above is finite 
if $\lambda>\kappa$, or if $\lambda=\kappa$ and $\beta_2>1$. 
We also have 
\[
\int_{r<d(x,y) \le 1} J(x,y)\,m({\rm d}y) 
\le C_3 \int_{r<d(x,y)\le 1}\frac{1}{d(x,y)^{\eta+\beta_1}}\,m({\rm d}y)
=C_3 \int_r^1 \frac{1}{s^{\eta + \beta_1}}\,{\rm d}V_x(s).
\]
Then by the integration by parts formula, 
\[
\int_r^1 \frac{1}{s^{\eta + \beta_1}}\,{\rm d}V_x(s)
\le V_x(1) + C_1 (\eta + \beta_1) \int_r^1 \frac{s^{\eta}}{s^{\eta + \beta_1 + 1}}\,{\rm d}s 
 \asymp r^{-\beta_1}.
\]
Hence the argument above implies that if $\lambda>\kappa$, or if $\lambda=\kappa$ and $\beta_2>1$, 
then 
\begin{itemize}
\item For any $r \in (0,1]$,  
\[
M_2(r) \lesssim  r^{-\beta_1}.
\]
\item For any $r>1$, 
\[
M_2(r)
\lesssim 
\begin{dcases}
e^{-(\lambda-\kappa)r} r^{-\beta_2} & \text{if $\lambda>\kappa$},\\
 r^{-(\beta_2-1)} & \text{if $\lambda=\kappa$ and  $\beta_2>1$}.
\end{dcases}
\] 
\end{itemize}

We now focus on the condition that 
$\lambda=\kappa$, $\beta_1=\alpha$ and $\beta_2=1+\alpha/2$ for some $\alpha\in (0,2)$. 
Then for some $C>0$, 
\[
M_1(r)\le C\times 
\begin{dcases}
r^{2-\alpha}, & 0<r\le 1, \\
r^{2-\alpha/2}, & r>1
\end{dcases}
\]
and 
\[
M_2(r)\le C\times 
\begin{dcases}
r^{-\alpha}, & 0<r\le 1, \\
r^{-\alpha/2}, & r>1.
\end{dcases}
\]
Therefore, 
\begin{equation}\label{eq:stable-upper}
\begin{split}
\lambda_e 
&\le \inf_{r>0}\left(\frac{\kappa^2}{4}M_1(r)+2 M_2(r)\right) \\
&\le C\times 
\begin{dcases}
\left(\frac{2}{2-\alpha}\right)^{1-\alpha/4}\left(\frac{\kappa}{2}\right)^{\alpha} 
& \text{if $0<\kappa \le 2\sqrt{\alpha/(2-\alpha)}$}, \\
\left(\frac{2}{2-\alpha}\right)^{1-\alpha/2}\left(\frac{\kappa}{2}\right)^{\alpha} 
& \text{if $\kappa > 2\sqrt{\alpha/(2-\alpha)}$}.
\end{dcases}
\end{split}
\end{equation}

\begin{rem}\label{rem:hyperbolic}\rm 
For $n\ge 2$, let ${\mathbb H}^n$ be the $n$-dimensional hyperbolic space, 
and let $\Delta$ be the Laplace-Beltrami operator on ${\mathbb H}^n$. 
We define the distance ball $K(r)=\{x\in {\mathbb H}^n \mid \rho(o,x)\le r\}$ for some $o\in {\mathbb H}^n$ and $r>0$.
Let $m$ be the Riemannian volume measure on ${\mathbb H}^n$. 
It is known that 
\begin{equation}\label{eq:hyp-vol}
m(K(R))=\omega_n \int_0^R (\sinh t)^{n-1}\,{\rm d}t\sim c_n e^{(n-1)R}, \quad R\rightarrow\infty
\end{equation}
and 
\begin{equation}\label{eq:hyp-ess}
\inf \sigma_{{\rm ess}}\left(-\frac{1}{2}\Delta \right)=\frac{(n-1)^2}{8}
\end{equation}
(see, e.g., \cite[Section 5.7]{D92}).
Note that the constant $n-1$ in \eqref{eq:hyp-ess} 
coincides with  the exponential volume growth rate of ${\mathbb H}^n$ in \eqref{eq:hyp-vol}. 

For $\alpha\in (0,2)$, let $({\cal E}^{\alpha},{\cal F}^{\alpha})$ be
a regular Dirichlet form on $L^2({\mathbb H}^n;m)$ subordinate to $({\cal E},{\cal F})$ 
with respect to the $\alpha/2$-subordinator. 
Then $({\cal E}^{\alpha},{\cal F}^{\alpha})$ is non-local, 
and its generator is formally written as $-(-\Delta/2)^{\alpha/2}$. 
\begin{enumerate}
\item[(i)] 
By \eqref{eq:hyp-ess}, we have for any $\alpha\in (0,2)$,  
\begin{equation}\label{eq:hyp-frac-ess}
\inf \sigma_{{\rm ess}}\left(\left(-\frac{1}{2}\Delta \right)^{\alpha/2}\right)
=\left\{\frac{(n-1)^2}{8}\right\}^{\alpha/2}
=\frac{(n-1)^{\alpha}}{2^{3\alpha/2}}.
\end{equation}
On the other hand, we see by \cite[Lemma 3.1]{RZ16} that  
\begin{equation}\label{eq:hyp-2}
J(x,y)\asymp 
\begin{dcases}
\frac{1}{d(x,y)^{n+\alpha}}, & d(x,y)<1, \\
\frac{e^{-(n-1)d(x,y)}}{d(x,y)^{\alpha}(1+d(x,y)^{1-\alpha/2})}, & d(x,y)\ge 1.
\end{dcases}
\end{equation}
Therefore, Theorem \ref{thm:ess} is applicable to the Dirichlet form $({\cal E}^{\alpha},{\cal F}^{\alpha})$ 
with the following parameters:
\[
\kappa=\lambda=n-1, \ \gamma=n, \ \beta_1=\alpha, \ \beta_2=1+\frac{\alpha}{2}.
\]
In particular, we have \eqref{eq:stable-upper} with $\kappa=n-1$, 
which might seem compatible with \eqref{eq:hyp-frac-ess}.
However, we do not know how the constant $C$ in \eqref{eq:stable-upper} depends on the parameter $\kappa$. 
Since the constant $C$ may affect the upper bound of $\lambda_e$ in \eqref{eq:stable-upper}, 
it is unclear whether \eqref{eq:stable-upper} is sharp or not in terms of the exponential volume growth rate.

\item[(ii)] 
Let $C_0^{{\rm lip}}({\mathbb H}^n)$ be 
the totality of Lipschitz continuous functions on ${\mathbb H}^n$ with compact support. 
Then by \cite[Theorem 2.1]{O02}, 
${\cal F}^{\alpha}\subset {\cal F}^{\beta}$ holds for any $\alpha, \beta\in (0,2]$ with $\alpha\le \beta$, 
and $C_0^{{\rm lip}}({\mathbb H}^n)$ is a core of ${\cal F}^{\alpha}$ for any $\alpha\in (0,2]$. 
Hence by \cite[Theorem 7.3]{FLW14} and the calculation similar to \cite[Subsection 14.4]{FLW14}, 
we see that for any $p\in (0,\alpha/4)$, 
there exists $c>0$ such that 
the metric $\rho(x,y):=c(d(x,y)\wedge d(x,y)^{p})$ is an intrinsic metric for $({\cal E}^{\alpha}, {\cal F}^{\alpha})$ 
in the sense of \cite{FLW14}.
However, if we define the $\rho$-distance ball $K_{\rho}(r)=\{x\in {\mathbb H}^d \mid \rho(o,x)\le r\}$ for $o\in {\mathbb H}^d$ and $r>0$, 
then \eqref{eq:hyp-vol} implies that for all large $R\ge 1$,
\[
m(K_{\rho}(R))\asymp \exp\left(c^{-1/p}(n-1)R^{1/p}\right).
\]
Since $0<p<1$, we have 
\begin{equation}\label{eq:super-exp}
\lim_{R\rightarrow\infty}\frac{1}{R}\log m(K_{\rho}(R))=\infty.
\end{equation}

By taking into consideration the calculation in \cite[Subsection 14.4]{FLW14}, 
we can regard the distance $\rho$ above as a natural  intrinsic metric for $({\cal E}^{\alpha},{\cal F}^{\alpha})$. 
However, since \eqref{eq:super-exp} holds, 
it would not follow from \cite[Theorem 1.1]{HKW13} that for $({\cal E}^{\alpha},{\cal F}^{\alpha})$, 
the essential spectrum is non-empty. 
\end{enumerate}
\end{rem}

\section{Coefficient growth}\label{sect:coeff}
In this section, we focus on the relation between 
the coefficient growth and the bottom of the essential spectrum. 
We here include the coefficient in the jump kernel, 
or in the underlying measure of non-local Dirichlet forms.
The latter formulation is nothing but the time change of Dirichlet forms. 
Throughout this section, we keep Assumption \ref{assum:length-1}.

\subsection{Coefficient in the jump kernel}\label{subsect:coeff}
In this subsection, we examine how the coefficient in the jump kernel affects 
the upper bound of the bottom of the essential spectrum. 
Let $p\in [0,2]$ and $q\in [0,2)$, and let 
\begin{equation*}
\begin{split}
c(x,y)
&=\{(1+d_0(x))^p+(1+d_0(y))^p\}{\bf 1}_{d(x,y) \le 1}\\
&+\{(1+d_0(x))^q+(1+d_0(y))^q\}{\bf 1}_{d(x,y) > 1}, \quad   x,y\in E.
\end{split}
\end{equation*}
We impose the next conditions on  the volume growth and jump kernel.
\begin{itemize}
\item 
There exist positive constants $C_1$ and $\eta$ such that 
\[
m(K_x(r)) \le C_1 r^{\eta}, \quad x\in E, \ r>0.
\]
\item
There exist positive constants $C_2$ and $\beta\in (q,2)$ such that 
\[
J(x,y) \le C_2 \frac{c(x,y)}{d(x,y)^{\eta+\beta}}, \quad (x,y)\in E\times E \setminus {\rm diag}.
\]
\end{itemize}

\begin{enumerate}
\item[(i)] We first prove that if $p<2$, then $\lambda_e=0$. 
Assume that $p<2$. 
Let $\delta>0$ satisfy $p<2(1-\delta)$ and $q<\beta(1-\delta)$.
For fixed constants $r>0$ and $c_*\in (0,1)$, we define 
\[
\rho_r(x)=(1+r+d_0(x))^{\delta}, \quad x\in E
\]
and 
\[
F_r(x,y)=c_* \{r+(d_0(x)\vee d_0(y))\}^{1-\delta}, \quad x,y\in E.
\]
Since 
\begin{equation*}
\begin{split}
|(1+r+t)^{\delta}-(1+r+s)^{\delta}|
&=\delta \int_{s\wedge t}^{s\vee t} \frac{1}{(1+r+u)^{1-\delta}}\,{\rm d}u\\
&\le \frac{\delta |t-s|}{(1+r+(s\wedge t))^{1-\delta}}, \quad s,t>0,
\end{split}
\end{equation*}
we have
\begin{equation}\label{eq:length-1}
\begin{split}
|\rho_r(x)-\rho_r(y)|
&\le \frac{\delta |d_0(x)-d_0(y)|}{(1+r+(d_0(x)\wedge d_0(y)))^{1-\delta}}\\
&\le \frac{\delta d(x,y)}{(1+r+(d_0(x)\wedge d_0(y)))^{1-\delta}}, \quad x,y\in E.
\end{split}
\end{equation}

Let us give upper bounds of $M_1(r)$ and $M_2(r)$ in this order. 
Suppose that $d(x,y) \le F_r(x,y)$. 
\begin{enumerate}
\item[(a)] 
Assume that $d_0(x)\le d_0(y)$. 
Then 
\[
d_0(y)\le d_0(x)+d(x,y)\le d_0(x)+c_* (r+d_0(y))^{1-\delta}\le d_0(x)+c_* \{(r+d_0(y))\vee 1\}
\]
and so 
\[
d_0(y)\le \frac{1}{1-c_*}(d_0(x)+c_* (r\vee 1)).
\]
This yields
\[
F_r(x,y)\lesssim (1+r+d_0(x))^{1-\delta}, \quad x,y\in E
\]
and 
\[
c(x,y)\lesssim (1+r+d_0(x))^p{\bf 1}_{d(x,y) \le 1}+(1+r+d_0(x))^q{\bf 1}_{d(x,y) > 1}, \quad x,y\in E.
\]
We also have by \eqref{eq:length-1},
\[
|\rho_r(x)-\rho_r(y)|
\le \frac{\delta d(x,y)}{(1+r+d_0(x))^{1-\delta}}, \quad x,y\in E.
\]
\item[(b)] 
Assume that $d_0(y)\le d_0(x)$. 
Then
\[
d_0(x)\le \frac{1}{1-c_*}(d_0(y)+c_* (r\vee 1))
\]
and so 
\[
1+r+d_0(y) \ge 1+r+(1-c_*)d_0(x)-c_*(r\vee 1) \ge (1-c_*)(1+r+d_0(x)).
\]
Hence by \eqref{eq:length-1},
\[
|\rho_r(x)-\rho_r(y)|
\lesssim \frac{d(x,y)}{(1+r+d_0(x))^{1-\delta}}, \quad x,y\in E.
\]
We also have 
\[
F_r(x,y)=c_* (r+d_0(x))^{1-\delta}, \quad x,y\in E
\]
and
\begin{equation*}
\begin{split}
c(x,y)
&\le  2(1+d_0(x))^p{\bf 1}_{d(x,y) \le 1}+2(1+d_0(x))^q{\bf 1}_{d(x,y) > 1}\\
&\lesssim (1+r+d_0(x))^p{\bf 1}_{d(x,y) \le 1}+(1+r+d_0(x))^q{\bf 1}_{d(x,y) > 1}, \quad x,y\in E.
\end{split}
\end{equation*}
\end{enumerate}

By (a) and (b) above, we get for some $c_{**}>0$, 
\begin{equation*}
\begin{split}
&\int_{0<d(x,y) \le F_r(x,y)}(\rho_r(x)-\rho_r(y))^2 J(x,y)\,m({\rm d}y)\\
&\lesssim \int_{0<d(x,y) \le c_{**} (1+r+d_0(x))^{1-\delta}} \frac{d(x,y)^2}{(1+r+d_0(x))^{2(1-\delta)}}
\frac{(1+r+d_0(x))^p}{d(x,y)^{\eta + \beta}}{\bf 1}_{d(x,y) \le 1}\,m({\rm d}y)\\
&+\int_{0<d(x,y) \le c_{**} (1+r+d_0(x))^{1-\delta}} \frac{ d(x,y)^2}{(1+r+d_0(x))^{2(1-\delta)}}
\frac{(1+r+d_0(x))^q}{d(x,y)^{\eta + \beta}}{\bf 1}_{d(x,y) > 1}\,m({\rm d}y)\\
&\lesssim (1+r+d_0(x))^{p-2(1-\delta)}
+ (1+r+d_0(x))^{q-2(1-\delta)}(1+r+d_0(x))^{(1-\delta)(2-\beta)}\\
&= (1+r+d_0(x))^{p-2(1-\delta)}
+ (1+r+d_0(x))^{q-\beta (1-\delta)},
\end{split}
\end{equation*}
which implies that 
\[
M_1(r)\lesssim r^{p-2(1-\delta)}+r^{q-\beta(1-\delta)}, \quad r\ge 1.
\]

Since 
\[
(1+r+d_0(y))^p\le (1+r+d_0(x)+d(x,y))^p\le 2^p\{(1+r+d_0(x))^p+d(x,y)^p\},
\]
we also obtain 
\begin{equation}\label{eq:ex-m2-1}
\begin{split}
M_2(r)
&\lesssim \int_{d(x,y) > c_* (r+d_0(x))^{1-\delta}}\frac{(1+r+d_0(x))^p+ d(x,y)^p}{d(x,y)^{\eta+\beta}}{\bf 1}_{d(x,y) \le 1}\,m({\rm d}y)\\
&+\int_{d(x,y) > c_* (r+d_0(x))^{1-\delta}}\frac{(1+r+d_0(x))^q+d(x,y)^q}{d(x,y)^{\eta+\beta}}{\bf 1}_{d(x,y) > 1}\,m({\rm d}y)\\
&\lesssim {\bf 1}_{c_* r^{1-\delta}<1}\left\{(1+r+d_0(x))^{p-\beta(1-\delta)}+(1+r+d_0(x))^{(p-\beta)(1-\delta)}\right\}\\
&+ (1+r+d_0(x))^{q-\beta(1-\delta)}+(1+r+d_0(x))^{(q-\beta)(1-\delta)}.
\end{split}
\end{equation}
In particular, if $r\ge c_*^{-1/(1-\delta)}$, then 
\begin{equation}\label{eq:ex-m2-2}
M_2(r)\lesssim r^{q-\beta(1-\delta)}+r^{(q-\beta)(1-\delta)} \asymp r^{q-\beta(1-\delta)}.
\end{equation}

By the definition of $\rho_r$, we have 
$m(K_{\rho_r}(R))\lesssim R^{\eta/\delta}$ for $R\ge 1$ and so $\mu_r=0$. 
Hence Theorem \ref{thm:ess} and \eqref{eq:ex-m2-2} yield 
$\lambda_e \lesssim   r^{q-\beta(1-\delta)}$ for any $r\ge c_*^{-1/(1-\delta)}$. 
Since $q-\beta(1-\delta)<0$, we have $\lambda_e=0$.

\item[(ii)]
We next prove that if $p=2$, then $\lambda_e<\infty$. 
Assume that $p=2$. 
For fixed constants $r>0$ and $c_*\in (0,1)$, let 
\[
\rho_r(x)=\log(r+d_0(x)), \quad x \in E
\]
and
\[
F_r(x,y)=c_* \{r+(d_0(x)\vee d_0(y))\}, \quad x,y\in E.
\]
Then as in \eqref{eq:length-1}, we have 
\begin{equation}\label{eq:length-2}
|\rho_r(x)-\rho_r(y)|
\le \frac{d(x,y)}{r+(d_0(x)\wedge d_0(y))}, \quad x,y\in E.
\end{equation}

Let us give upper bounds of $M_1(r)$ and $M_2(r)$ in this order. 
Suppose that $d(x,y) \le F_r(x,y)$. 
\begin{enumerate}
\item[(a)] 
Assume that $d_0(x)\le d_0(y)$. Then
\[
d_0(y)\le d_0(x)+d(x,y)\le d_0(x)+c_* (r+d_0(y))
\]
and so 
\[
d_0(y)\le \frac{1}{1-c_*}(d_0(x)+c_* r).
\]
Hence
\[
c(x,y)\lesssim (1+r+d_0(x))^2{\bf 1}_{d(x,y)\le 1}+ (1+r+d_0(x))^q{\bf 1}_{d(x,y)>1}, 
\quad x, y \in E\]
and 
\[
F_r(x,y)\lesssim r+d_0(x), \quad x, y \in E.
\]
By \eqref{eq:length-2}, we also have 
\[
|\rho_r(x)-\rho_r(y)| \le \frac{d(x,y)}{r+d_0(x)}, \quad x,y\in E.
\]

\item[(b)] 
Assume that $d_0(y)\le d_0(x)$. 
Then 
\[
c(x,y)\le 2(1+d_0(x))^2 {\bf 1}_{d(x,y) \le 1}+2(1+d_0(x))^q{\bf 1}_{d(x,y) > 1}
\] 
and $F_r(x,y)=c_* (r+d_0(x))$. 
The latter yields
\[
d_0(y)\ge d_0(x)-d(x,y) \ge d_0(x)-c_* (r+d_0(x)) = (1-c_*)d_0(x)-c_* r
\]
and so 
\[
r+d_0(y)\ge (1-c_*)(r+d_0(x)).
\]
Hence by \eqref{eq:length-2},
\[
|\rho_r(x)-\rho_r(y)| \le \frac{d(x,y)}{r+d_0(y)} 
\le \frac{1}{1-c_*} \cdot \frac{d(x,y)}{r+d_0(x)}, \quad x,y\in E.
\]
\end{enumerate}

By (a) and (b) above, we get for some $c_{**}>0$,
\begin{equation*}
\begin{split}
&\int_{0<d(x,y) \le F_r(x,y)}(\rho_r(x)-\rho_r(y))^2J(x,y)\,m({\rm d}y)\\
&\lesssim \int_{0<d(x,y) \le c_{**}(r+d_0(x))}\frac{d(x,y)^2}{(r+d_0(x))^2}\frac{(1+r+d_0(x))^2}{d(x,y)^{\eta+\beta}}{\bf 1}_{d(x,y) \le 1}\,m({\rm d}y)\\
&+\int_{0<d(x,y) \le c_{**}(r+d_0(x))}\frac{d(x,y)^2}{(r+d_0(x))^2}\frac{(1+r+d_0(x))^q}{d(x,y)^{\eta+\beta}}{\bf 1}_{d(x,y) > 1}\,m({\rm d}y)\\
&\lesssim \left(1+\frac{1}{r+d_0(x)}\right)^2+ \frac{(1+r+d_0(x))^q}{(r+d_0(x))^{\beta}},
\end{split}
\end{equation*}
where the last relation follows by the same calculation as in \eqref{eq:ibp-1}. 
Therefore, $M_1(r)\lesssim 1$ for $r\ge 1$.
By following the calculation in \eqref{eq:ex-m2-1} and \eqref{eq:ex-m2-2}, 
we also see that if $r\ge 1/c_*$, then 
\[
M_2(r)\lesssim r^{q-\beta}.
\]

On the other hand, 
we see by the definition of $\rho_r$ that  
$m(K_{\rho_r}(R))\lesssim e^{\eta R}$ for $R\ge 1$ and so $\mu_r\le \eta$. 
Then Theorem \ref{thm:ess} implies that for any $r\ge 1/c_*$,
\[
\lambda_e \lesssim \frac{\eta^2}{2} + r^{q-\beta}.
\]
Since $q < \beta$, we obtain $\lambda_e \lesssim \eta^2/2$.
\end{enumerate}

\begin{rem}\rm 
We here note the sharpness of Theorem \ref{thm:ess}. 
Let ${\rm d}x$ be the Lebesgue measure on ${\mathbb R}^d$. 
For $p\ge 0$ and $q\ge 0$, define 
\[
c(x,y)=\{(1+|x|)^p+(1+|y|)^p\}{\bf 1}_{|x-y| \le 1}+\{(1+|x|)^q+(1+|y|)^q\}{\bf 1}_{|x-y| > 1}, \quad x,y\in {\mathbb R}^d.
\]
For $\alpha\in (0,2)$, let $J(x,y)$ be a positive measurable function on ${\mathbb R}^d\times {\mathbb R}^d\setminus {\rm diag}$ such that 
\[
J(x,y)\asymp \frac{c(x,y)}{|x-y|^{d+\alpha}}, \quad (x,y)\in {\mathbb R}^d\times {\mathbb R}^d \setminus {\rm diag}.
\]
Assume that $q\in [0,\alpha)$. 
If we define 
\begin{equation*}
\begin{split}
{\cal D}({\cal E})&=\left\{u\in L^2({\mathbb R}^d;{\rm d}x) 
\mid \iint_{{\mathbb R}^d\times {\mathbb R}^d \setminus {\rm diag}}(u(x)-u(y))^2J(x,y)\,{\rm d}x{\rm d}y<\infty\right\},\\
{\cal E}(u,u)&=\iint_{{\mathbb R}^d\times {\mathbb R}^d \setminus {\rm diag}}(u(x)-u(y))^2J(x,y)\,{\rm d}x{\rm d}y, \quad u\in {\cal D}({\cal E}), 
\end{split}
\end{equation*}
then $C_0^{\infty}({\mathbb R}^d)$ is dense in ${\cal D}({\cal E})$ 
with respect to the norm $\|u\|_{{\cal E}}=({\cal E}(u,u)+\|u\|_{L^2({\mathbb R}^d;{\rm d}x)}^2)^{1/2}$. 
Hence, if we let ${\cal F}$ be the closure of $C_0^{\infty}({\mathbb R}^d)$ 
with respect to the norm $\|\cdot\|_{{\cal E}}$, 
then $({\cal E},{\cal F})$ is a regular Dirichlet form on $L^2({\mathbb R}^d;{\rm d}x)$. 

We know from \cite[Theorem 1.1]{SW23} that 
$\lambda_e<\infty$ if and only if $p \le 2$, independently of the value of $q\in [0,\alpha)$.
We further see by (i) above that $\lambda_e=0$ if $0\le p <2$. 
Let us assume that $p=2$. 
According to the calculations in \cite[Proposition 2.6 and Lemma 2.8]{SW23},  
there exist positive constants $R_0$ and $C_0$ such that 
for any $u\in C_0^{\infty}({\mathbb R}^d)$ satisfying $u=0$ on $K_0:=\{x\in {\mathbb R}^d \mid |x|\le R_0\}$, 
\begin{equation}\label{eq:outside}
{\cal E}(u,u)\ge C_0\int_{|x|>R_0} u(x)^2\,{\rm d}x.
\end{equation}
On the other hand, 
by \cite[Theorem 3.2, Remark 3.3 (c) and Corollary 4.3]{LS19}, 
Persson's formula is applicable to $({\cal E},{\cal F})$: 
\[
\lambda_e
=\sup_{K\subset {\mathbb R}^d: \text{compact}}
\inf\left\{{\cal E}(u,u) \mid u\in C_0^{\infty}({\mathbb R}^d\setminus K), \ \|u\|_{L^2({\mathbb R}^d;{\rm d}x)}=1\right\}.
\]
Combining this with \eqref{eq:outside}, we have 
\[
\lambda_e
\ge 
\inf\left\{{\cal E}(u,u) \mid u\in C_0^{\infty}({\mathbb R}^d\setminus K_0), \ \|u\|_{L^2({\mathbb R}^d;{\rm d}x)}=1\right\}
\ge C_0>0.
\]
Namely, Theorem \ref{thm:ess} is sharp in regard to the positivity of $\lambda_e$.

\end{rem}

\subsection{Time change}

In this subsection, we discuss how the coefficient in the underlying measure affects the upper bound of $\lambda_e$. 
We impose the next conditions on  the volume growth and jump kernel.
\begin{itemize}
\item 
There exist positive constants $\eta$, $C_1$ and $C_2$ such that 
\[
C_1 r^{\eta} \le m(K_x(r)) \le  C_2 r^{\eta}, \quad x\in E, \ r>0.
\]
\item
There exist positive constants $C_3$ and $\beta\in (0,2)$ such that 
\[
J(x,y) \le \frac{C_3}{d(x,y)^{\eta+\beta}}, \quad x,y\in E\times E \setminus {\rm diag}.
\]
\end{itemize}

Let $w(x)$ be a positive Borel measurable function on ${\mathbb R}^d$ such that for some $p>0$, 
\[
w(x)\asymp (1+d_0(x))^p, \quad x\in E.
\]
We define a measure $\mu$ on $E$ by $\mu({\rm d}x)=w(x)^{-1}\,m({\rm d}x)$. 
Let $({\cal F}_{e},{\cal E})$ be the extended Dirichlet space of $({\cal F},{\cal E})$ 
(see \cite[p.~41]{FOT11} for definition). 
Let $(\check{{\cal E}},\check{\cal F})$ be the time changed Dirichlet form of $({\cal E},{\cal F})$ on $E$  
with respect to the measure $\mu$ (see \cite[(6.2.4)]{FOT11} for definition). 
Since $\mu$ is of full support, we know that 
\begin{equation}\label{eq:time-change}
\check{\cal F}={\cal F}_e\cap L^2(E;\mu), \quad \check{\cal E}(u,u)={\cal E}(u,u), \quad u\in \check{\cal F}
\end{equation}
(see \cite[(6.2.22)]{FOT11}). 
In particular, $(\check{{\cal E}},\check{\cal F})$ is a regular Dirichlet form on $L^2(E;\mu)$ 
with core ${\cal F}\cap C_0(E)$ (\cite[Theorem 6.2.1 (iii)]{FOT11}).
If we define 
\[
\check{J}(x,{\rm d}y)=w(x)J(x,y)\,m({\rm d}y), 
\]
then 
\[
J(x,y)\,m({\rm d}y)m({\rm d}x)=\check{J}(x,{\rm d}y)\mu({\rm d}x).
\]
Moreover, we see by \eqref{eq:time-change} that for any $u\in {\cal F}\cap C_0(E)$, 
\begin{equation*}
\begin{split}
\check{\cal E}(u,u)={\cal E}(u,u)
&=\iint_{E\times E \setminus {\rm diag}}(u(x)-u(y))^2\, J(x,y)\,m({\rm d}y)m({\rm d}x)\\
&=\iint_{E\times E \setminus {\rm diag}}(u(x)-u(y))^2\,\check{J}(x,{\rm d}y)\mu({\rm d}x).
\end{split}
\end{equation*}

\begin{enumerate}
\item[(i)] Assume first that $p<\beta$. 
Let $\rho_r(x)=(r+d_0(x))^\delta$ and 
\[
F_r(x,y)=c\{r+(d_0(x)\vee d_0(y))\}^{1-\delta}, \quad x,y\in E
\]
for some $c\in (0,1)$ and $\delta\in (0,1)$ with $p<\beta(1-\delta)$. 
Then as in Subsection \ref{subsect:coeff} (i), we have 
\[
M_1(r)\lesssim 
r^{p-\beta(1-\delta)}, \quad 
M_2(r)\lesssim r^{p-\beta(1-\delta)}, \quad r\ge 1.
\]

\begin{enumerate}
\item[(a)] 
Assume that $p\le \eta$. Then for all sufficiently large $R>1$,
\[
\mu(K_{\rho_r}(R))
\lesssim 
\begin{dcases}
R^{(\eta-p)/\delta} & (p<\eta),\\
\log R & (p=\eta).
\end{dcases}
\]
This yields $\mu_r=0$ and so $\lambda_e=0$ by Theorem \ref{thm:ess}. 
\item[(b)] 
Assume that $p>\eta$ and so $\mu(E)<\infty$. 
Since $\eta<\beta$ by assumption, 
we can show that $({\cal E},{\cal F})$ is recurrent 
by \cite[Theorem A.3]{S23} (see also references therein for previous results) 
and by following the calculation in \cite[Example A.5]{S23}. 
Hence by \cite[Theorem 1.6.3]{FOT11}, it follows that $1\in {\cal F}_e$ and ${\cal E}(1,1)=0$. 
Since this and \eqref{eq:time-change} yield $1\in \check{\cal F}$, 
Theorem \ref{thm:ess-f} is applicable to $(\check{{\cal E}},\check{\cal F})$. 

Assume in addition that $C_2<C_1p(p-\eta)^{-1}$. 
Then for all sufficiently large $R>1$, 
\[
\mu(K_{\rho_r}(R)^c)
\gtrsim R^{-(p-\eta)/\delta}.
\]
This implies that $\nu_r=0$ and thus $\lambda_e=0$ by Theorem \ref{thm:ess-f}. 
\end{enumerate}

\item[(ii)] 
Assume next that  $p=\beta$. 
Let $\rho_r(x)=\log (r+d_0(x))$ and 
\[
F_r(x,y)=c\{r+(d_0(x)\vee d_0(y))\}, \quad x,y\in E
\]
for some $c\in (0,1)$.  
Then as in Subsection \ref{subsect:coeff} (ii), 
\[
M_1(r)\lesssim 1/r^{\beta}, \quad 
M_2(r)\lesssim 1, \quad r\ge 1.
\]
\begin{enumerate}
\item[(a)]
Assume that  $p\le \eta$. 
Then for all sufficiently large $R>1$,
\[
\mu(K_{\rho_r}(R))
\lesssim   
\begin{dcases}
e^{(\eta-p)R}, & p<\eta,\\
R, & p=\eta.
\end{dcases}
\]
Hence Theorem \ref{thm:ess} yields $\lambda_e<\infty$ for $p<\eta$, 
and $\lambda_e=0$ for $p=\eta$. 

\item[(b)] 
Assume that $p>\eta$ and so $\mu(E)<\infty$.  
By the same argument as in (i)(b), 
we can apply Theorem \ref{thm:ess-f} to  $(\check{{\cal E}},\check{\cal F})$. 

Assume in addition that $C_2 < C_1 p(p-\eta)^{-1}$. 
Then for all sufficiently large $R\ge 1$, 
\[
\mu(K_{\rho_r}(R)^c) \gtrsim  e^{-(p-\eta)R}. 
\]
Therefore, Theorem \ref{thm:ess-f} yields $\lambda_e<\infty$. 
\end{enumerate}
\end{enumerate}

We now examine the sharpness of the upper bound of $\lambda_e$ for time changed Dirichlet forms. 
In what follows, we assume that $E={\mathbb R}^d$. 
Let ${\rm d}x$ be the $d$-dimensional Lebesgue measure. 
We also assume that $m({\rm d}x)={\rm d}x$ and 
\[
J(x,y)\asymp |x-y|^{-(d+\alpha)}, \quad 
(x,y)\in {\mathbb R}^d \times {\mathbb R}^d\setminus {\rm diag}
\] 
for some $\alpha\in (0,2)$. 
Then $({\cal E},{\cal F})$ is a regular Dirichlet form on $L^2({\mathbb R}^d;{\rm d}x)$ with $C_0^{\infty}({\mathbb R}^d)$ as a core. 

Let $w(x)$ be a positive measurable function on ${\mathbb R}^d$ 
such that $w(x)\asymp (1+|x|)^p \ (x\in {\mathbb R}^d)$ for some $p>0$. 
We then define the measure $\mu$ on ${\mathbb R}^d$ by $\mu({\rm d}x)=w(x)^{-1}{\rm d}x$. 
Let $(\check{\cal E},\check{\cal F})$ be a time changed Dirichlet form of $({\cal E},{\cal F})$ 
with respect to $\mu$. 
By \cite{CW14, HW25, M21, W19, WZ21}, 
we already know necessary and sufficient condition for noncompactness of 
the Markovian semigroups associated with $(\check{\cal E},\check{\cal F})$. 
In particular, these conditions are consistent with those in (i) and (ii) of this section. 
In what follows, we discuss the positivity of $\lambda_e$ for $d>\alpha$. 

For $\delta>0$, let $\phi_{\delta}(x)=(1+|x|^2)^{-\delta}$ and 
\[
{\cal A}\phi_{\delta}(x)=\int_{|z|>1}(\phi_{\delta}(x+z)-\phi_{\delta}(x))|z|^{-(d+\alpha)}\,{\rm d}z.
\]
For $R>0$, define $K_0(R)=\{x\in {\mathbb R}^d \mid |x|\le R\}$. 
The next proposition states that Theorem \ref{thm:ess} is sharp in regard to the positivity of $\lambda_e$.
\begin{prop}
\begin{enumerate}
\item[{\rm (1)}] 
Let $\delta>0$ and  $g\in C_0^{\infty}({\mathbb R}^d)$.  Then 
\[
\iint_{|z|>1}|\phi_{\delta}(x+z)-\phi_{\delta}(x)| |g(x+z)-g(x)| |z|^{-(d+\alpha)} \,{\rm d}z{\rm d}x<\infty
\]
and 
\[
\iint_{|z|>1}(\phi_{\delta}(x+z)-\phi_{\delta}(x))(g(x+z)-g(x)) |z|^{-(d+\alpha)} \,{\rm d}z{\rm d}x
=-2\int_{{\mathbb R}^d}{\cal A}\phi_{\delta}(x)g(x)\,{\rm d}x.
\]

\item[{\rm (2)}]
Suppose that $d>\alpha$ and $p=\alpha$. 
Then there exist positive constants $C$ and $R_0$ such that for any $g\in C_0^{\infty}({\mathbb R}^d\setminus K_0(R_0))$, 
\[
\iint_{|z|>1}(g(x+z)-g(x))^2 |z|^{-(d+\alpha)} \,{\rm d}z{\rm d}x 
\ge C \int_{|x|>R_0}g(x)^2\,\mu({\rm d}x).
\]
\item[{\rm (3)}]
Suppose that $d>\alpha$ and $p=\alpha$. 
Let $\lambda_e$ be the bottom of the essential spectrum of the nonpositive self-adjoint operator on $L^2({\mathbb R}^d;\mu)$
associated with $(\check{\cal E},\check{\cal F})$ as in \eqref{eq:generator}. 
Then $0<\lambda_e<\infty$.
\end{enumerate}
\end{prop}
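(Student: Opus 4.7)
My plan is to treat the three parts in sequence: (1) is a symmetric Fubini computation providing a nonlocal integration-by-parts formula for $\phi_\delta$, (2) leverages it through a ground state representation to extract a weighted Hardy-type bound for the big-jump form, and (3) combines Theorem \ref{thm:ess} with Persson's formula and (2) to pin down $\lambda_e$.

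For (1), I would observe that $\phi_\delta$ takes values in $(0,1]$, $g\in C_0^\infty$ is bounded with compact support, and $\int_{|z|>1}|z|^{-(d+\alpha)}\,{\rm d}z<\infty$. This makes each of the four products from expanding $(\phi_\delta(x+z)-\phi_\delta(x))(g(x+z)-g(x))$ absolutely integrable against $|z|^{-(d+\alpha)}\,{\rm d}z\,{\rm d}x$. The identity then follows by Fubini, the substitution $y=x+z$, and the evenness of the kernel: the two ``diagonal'' terms each contribute $c_K\int\phi_\delta g\,{\rm d}x$ (where $c_K=\int_{|z|>1}|z|^{-(d+\alpha)}\,{\rm d}z$), the two ``cross'' terms each contribute $-\int g(x)[{\mathcal A}\phi_\delta(x)+c_K\phi_\delta(x)]\,{\rm d}x$, and the $c_K$ contributions cancel to leave $-2\int {\mathcal A}\phi_\delta\cdot g\,{\rm d}x$.

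For (2), my strategy is the ground state representation for the nonlocal form with respect to $\phi_\delta$. Writing $g=\phi_\delta u$ with $u\in C_0^\infty({\mathbb R}^d\setminus K_0(R_0))$ (well-defined since $\phi_\delta>0$ is bounded below on compact sets), a direct four-term expansion---equivalent to invoking (1) applied with the test function $g^2/\phi_\delta$---yields
\[
\iint_{|z|>1}(g(x+z)-g(x))^2\,\frac{{\rm d}z\,{\rm d}x}{|z|^{d+\alpha}}
= \iint_{|z|>1}\phi_\delta(x)\phi_\delta(x+z)(u(x+z)-u(x))^2\,\frac{{\rm d}z\,{\rm d}x}{|z|^{d+\alpha}}
+2\int g(x)^2\,\frac{-{\mathcal A}\phi_\delta(x)}{\phi_\delta(x)}\,{\rm d}x.
\]
Discarding the nonnegative first term on the right, it suffices to establish the pointwise Lyapunov estimate $-{\mathcal A}\phi_\delta(x)\ge c(1+|x|)^{-\alpha}\phi_\delta(x)$ for $|x|>R_0$, for some $\delta,c,R_0>0$; since $(1+|x|)^{-\alpha}\asymp w(x)^{-1}$ when $p=\alpha$, this gives exactly the inequality in (2). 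I expect this pointwise estimate to be the main obstacle. The approach is to decompose the integral defining ${\mathcal A}\phi_\delta(x)$ into the regions $1<|z|\le|x|/2$, $|x|/2<|z|\le2|x|$, $|z|>2|x|$, and compare with $-(-\Delta)^{\alpha/2}\phi_\delta$: the small-jump correction $\int_{|z|\le1}(\phi_\delta(x+z)-\phi_\delta(x))|z|^{-(d+\alpha)}\,{\rm d}z$ is $O(|x|^{-2\delta-2})$ by second-order Taylor, which is strictly subdominant to the expected leading order $|x|^{-2\delta-\alpha}$ since $\alpha<2$. The classical scaling $(-\Delta)^{\alpha/2}|x|^{-2\delta}=C(d,\alpha,\delta)|x|^{-2\delta-\alpha}$ has strictly positive constant $C$ precisely when $0<2\delta<d-\alpha$, a range made nonempty by the hypothesis $d>\alpha$; choosing any such $\delta$ and transferring this asymptotic from $|x|^{-2\delta}$ to the regularized $\phi_\delta=(1+|x|^2)^{-\delta}$ yields the required lower bound.

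For (3), the upper bound $\lambda_e<\infty$ is a specialization of Theorem \ref{thm:ess} to $(\check{\cal E},\check{\cal F})$: it is exactly the sub-case $p=\alpha=\beta$, $p<\eta=d$ of the time-change analysis in Section \ref{sect:coeff}, where the logarithmic length function $\rho_r(x)=\log(r+|x|)$ gives finite $M_1(r)$, $M_2(r)$ and $\mu_r\le d-\alpha$. For the lower bound I would invoke Persson's formula in the form stated in the remark after Section \ref{subsect:coeff}, cited from \cite{LS19}, which applies equally well to $(\check{\cal E},\check{\cal F})$ with its core $C_0^\infty({\mathbb R}^d)$. Taking the compact set $K=K_0(R_0)$, any $u\in C_0^\infty({\mathbb R}^d\setminus K_0(R_0))$ normalized in $L^2({\mathbb R}^d;\mu)$ satisfies $\check{\cal E}(u,u)={\cal E}(u,u)\gtrsim \iint_{|z|>1}(u(x+z)-u(x))^2|z|^{-(d+\alpha)}\,{\rm d}z\,{\rm d}x$, and then (2) together with $w^{-1}\asymp(1+|x|)^{-\alpha}$ gives $\check{\cal E}(u,u)\ge C'\|u\|_{L^2({\mathbb R}^d;\mu)}^2=C'>0$; hence $\lambda_e\ge C'>0$.
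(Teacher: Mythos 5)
Your overall strategy coincides with the paper's: part (1) is the absolute-integrability plus Fubini/translation argument, part (2) is a ground-state (Hardy-type) inequality driven by a Lyapunov estimate for $\phi_\delta$, and part (3) combines the time-change computation from earlier in Section \ref{sect:coeff} with Persson's formula from \cite{LS19}. Parts (1) and (3) are essentially identical to the paper's proof; in (1) you expand into four products while the paper keeps the difference $\phi_\delta(x+z)-\phi_\delta(x)$ intact and splits only the $g$-factor, but both are correct and equivalent.

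The one substantive divergence, and also the one genuine gap, is in (2). Your ground-state representation identity
\[
\iint_{|z|>1}(g(x+z)-g(x))^2\,\tfrac{{\rm d}z\,{\rm d}x}{|z|^{d+\alpha}}
= \iint_{|z|>1}\phi_\delta(x)\phi_\delta(x+z)(u(x+z)-u(x))^2\,\tfrac{{\rm d}z\,{\rm d}x}{|z|^{d+\alpha}}
+2\int g^2\,\tfrac{-{\mathcal A}\phi_\delta}{\phi_\delta}\,{\rm d}x
\]
with $g=\phi_\delta u$ is correct and is precisely the mechanism behind the inequality the paper obtains by invoking ``(1) and the proof of \cite[Lemma 2.8]{SW23}''; deriving it explicitly is a reasonable self-contained alternative. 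However, the paper then pulls the crucial pointwise Lyapunov bound
$-{\mathcal A}\phi_{\delta_0}(x)/\phi_{\delta_0}(x)\ge C_0(1+|x|)^{-\alpha}$ for $|x|>R_0$ directly from \cite[Proposition 3.11]{S23}, whereas you propose to reprove it via the classical scaling identity $(-\Delta)^{\alpha/2}|x|^{-2\delta}=C(d,\alpha,\delta)|x|^{-2\delta-\alpha}$ and then ``transferring this asymptotic from $|x|^{-2\delta}$ to the regularized $\phi_\delta$.'' That transfer is the entire content of the cited proposition and is not a formality: $\phi_\delta$ differs from $|x|^{-2\delta}$ near the origin, ${\mathcal A}$ is nonlocal, and one must actually show that the far-field contribution still produces the leading order $|x|^{-2\delta-\alpha}$ with a strictly positive constant. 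Your sketch correctly identifies that the small-jump correction is $O(|x|^{-2\delta-2})$ and hence subdominant (since $\alpha<2$), and that $0<2\delta<d-\alpha$ is the right window, but the asymptotic comparison for the big-jump part itself is asserted rather than carried out. Either carry out the region-by-region estimate you outline to completion, or simply cite \cite[Proposition 3.11]{S23} as the paper does.

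A minor point on (3): Persson's formula from \cite{LS19} is applied in the paper after first verifying ultracontractivity of the time-changed semigroup via \cite[Lemma 4.9]{M21}; your write-up should include that hypothesis check before invoking the formula.
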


\begin{proof}
We first prove (1). 
Let $g\in C_0^{\infty}({\mathbb R}^d)$, and let $\omega_d$ be a surface area of the unit ball in ${\mathbb R}^d$. 
Then 
\begin{equation*}
\begin{split}
&\iint_{|z|>1}|\phi_{\delta}(x+z)-\phi_{\delta}(x)| |g(x)| |z|^{-(d+\alpha)} \,{\rm d}z{\rm d}x \\
&\le 2\int_{{\mathbb R}^d}|g(x)|\,{\rm d}x \int_{|z|>1}|z|^{-(d+\alpha)}\,{\rm d}z
=\frac{d\omega_d}{\alpha}\int_{{\mathbb R}^d}|g(x)|\,{\rm d}x<\infty
\end{split}
\end{equation*}
and 
\begin{equation*}
\begin{split}
&\iint_{|z|>1}|\phi_{\delta}(x+z)-\phi_{\delta}(x)| |g(x+z)| |z|^{-(d+\alpha)} \,{\rm d}z{\rm d}x\\
&\le 2 \int_{|z|>1} \left(\int_{{\mathbb R}^d}|g(x+z)| \,{\rm d}x \right)|z|^{-(d+\alpha)} \,{\rm d}z 
= 2\int_{{\mathbb R}^d}|g(x)|\,{\rm d}x \int_{|z|>1}|z|^{-(d+\alpha)}\,{\rm d}z<\infty.
\end{split}
\end{equation*}
We thus arrive at the first assertion. 

By the first assertion, we have 
\[
-\iint_{|z|>1}(\phi_{\delta}(x+z)-\phi_{\delta}(x)) g(x) |z|^{-(d+\alpha)} \,{\rm d}z{\rm d}x
=-\int_{{\mathbb R}^d}{\cal A}\phi_{\delta}(x)g(x)\,{\rm d}x.
\]
By the translation invariance of the Lebesgue measure and the symmetry of the function $|z|^{-(d+\alpha)}$, 
we also obtain 
\begin{equation*}
\begin{split}
&\iint_{|z|>1}(\phi_{\delta}(x+z)-\phi_{\delta}(x)) g(x+z) |z|^{-(d+\alpha)} \,{\rm d}z{\rm d}x\\
&=\iint_{|z|>1}(\phi_{\delta}(x)-\phi_{\delta}(x-z)) g(x) |z|^{-(d+\alpha)} \,{\rm d}z{\rm d}x\\
&=\iint_{|z|>1}(\phi_{\delta}(x)-\phi_{\delta}(x+z)) g(x) |z|^{-(d+\alpha)} \,{\rm d}z{\rm d}x
=-\int_{{\mathbb R}^d}{\cal A}\phi_{\delta}(x)g(x)\,{\rm d}x.
\end{split}
\end{equation*}
Hence we get the second assertion.

We next prove (2). 
It follows by \cite[Proposition 3.11]{S23} that if $d>\alpha$, 
then there exist positive constants $\delta_0$, $C_0$ and $R_0$ such that 
\[
\frac{-{\cal A}\phi_{\delta_0}}{\phi_{\delta_0}}(x)\ge \frac{C_0}{(1+|x|)^{\alpha}}, \quad |x|>R_0.
\]
Then by (1) and the proof of \cite[Lemma 2.8]{SW23}, we have 
for any $g\in C_0^{\infty}({\mathbb R}^d\setminus K_0(R_0))$, 
\begin{equation*}
\begin{split}
&\iint_{|z|>1}(g(x+z)-g(x))^2 |z|^{-(d+\alpha)} \,{\rm d}z{\rm d}x 
\ge 2 \int_{|x|>R_0}\frac{-{\cal A}\phi_{\delta_0}}{\phi_{\delta_0}}(x)g(x)^2\,{\rm d}x \\
&\ge 2C_0 \int_{|x|>R_0}g(x)^2\frac{1}{(1+|x|)^{\alpha}}\,{\rm d}x
\ge c_1 \int_{|x|>R_0}g(x)^2 \,\mu({\rm d}x).
\end{split}
\end{equation*}
Therefore, the proof of (2) is complete. 

We finally prove (3). 
Let $\{\check{p}_t\}_{t\ge 0}$ be the Markovian semigroup on $L^2({\mathbb R}^d;\mu)$ 
associated with $(\check{\cal E},\check{\cal F})$. 
Since $\{\check{p}_t\}_{t\ge 0}$ is ultracontractive by \cite[Lemma 4.9]{M21}, 
we obtain by \cite[Remark 2.2 (c), Theorem 3.2 and Remark 3.3 (c)]{LS19} together with \eqref{eq:time-change}, 
\begin{equation}\label{eq:ess-time}
\lambda_e
=\sup_{K\subset {\mathbb R}^d: \text{compact}}
\inf\left\{{\cal E}(u,u) \mid u\in C_0^{\infty}({\mathbb R}^d\setminus K), \ \|u\|_{L^2({\mathbb R}^d;\mu)}=1\right\}.
\end{equation}
On the other hand, we see by (2) that there exist positive constants $c_2$, $c_3$ and $R_0$ such that 
for any $g\in C_0^{\infty}({\mathbb R}^d\setminus K_0(R_0))$, 
\[
{\cal E}(g,g)\ge c_2  \iint_{|z|>1}(g(x+z)-g(x))^2 |z|^{-(d+\alpha)} \,{\rm d}z{\rm d}x  
\ge c_3\int_{|x|>R_0}g(x)^2 \,\mu({\rm d}x).
\]
Combining this with \eqref{eq:ess-time}, we have 
\[\lambda_e 
\ge \inf\left\{{\cal E}(u,u) \mid u\in C_0^{\infty}({\mathbb R}^d\setminus K_0(R_0)), \ \|u\|_{L^2({\mathbb R}^d;\mu)}=1\right\}
\ge c_3>0. 
\]
Since $\lambda_e<\infty$ by (ii) (a) in this section, we complete the proof of (3).
\end{proof}

\begin{rem}\rm
When $\alpha>d=1$ and $p=\alpha$, 
it is unclear if Persson's formula (\cite[Theorem 5.5]{BGS23}, \cite[Theorem 3.2]{LS19}) is applicable to 
$(\check{\cal E}, \check{\cal F})$ at this moment. 
In particular, we do not know the positivity of $\lambda_e$. 
\end{rem}

\section{Ornstein-Uhlenbeck type operators}
\label{sect:ou-type}

In this section, we discuss  the bottom of the spectrum of a non-local operator 
which is related to the fractional Laplacian with the drift of Ornstein-Uhlenbeck type 
(see, e.g., the introduction of \cite{WW15} and references therein for details on this operator). 

Let $E$ be a locally compact separable metric space, 
and let $m$ be a positive Radon measure on $E$ with full support. 
Recall that $K_x(r)=\left\{y\in E \mid d(x,y)\le r\right\}$ for $x\in E$ and $r>0$. 
We assume that $K_x(r)$ is compact for any $x\in E$ and $r>0$. 
We also assume that for some positive constants $C_1$, $C_2$ and $\eta$,  
\begin{equation}\label{eq:ou-volume}
C_1 r^{\eta} \le m(K_x(r)) \le C_2r^{\eta}, \quad x\in E, \ r>0.
\end{equation}
Let $J(x,y)$ be a positive measurable function on $E\times E$ such that 
for some positive constants $C_3$ and $\beta\in (0,2)$, 
\begin{equation}\label{eq:ou-jump}
J(x,y) \le \frac{C_3}{d(x,y)^{\eta+\beta}}, \quad (x,y)\in E\times E \setminus {\rm diag}.
\end{equation}

Let $V(r)$ be a positive increasing function on $[0,\infty)$, 
and let $\mu_V({\rm d}x)=e^{-V(d_0(x))}\,m({\rm d}x)$. 
We define the quadratic form $({\cal E},{\cal D}({\cal E}))$ on $L^2(E;\mu_V)$ by 
\begin{equation*}
\begin{split}
{\cal D}({\cal E})
&=\left\{u\in L^2(E;\mu_V) \mid \iint_{E\times E \setminus{\rm diag}}(u(x)-u(y))^2 \, J(x,y) m({\rm d}y)\mu_V({\rm d}x)<\infty\right\}, \\
{\cal E}(u,u)
&=\iint_{E\times E\setminus {\rm diag}}(u(x)-u(y))^2 \, J(x,y) m({\rm d}y)\mu_V({\rm d}x), \quad u\in {\cal D}({\cal E}).
\end{split}
\end{equation*}
For $u\in {\cal D}({\cal E})$, let 
\[{\cal E}_1(u,u)={\cal E}(u,u)+\|u\|_{L^2(E;m)}^2, \quad \|u\|_{{\cal E}}=\sqrt{{\cal E}_1(u,u)}.
\]
Then ${\cal D}({\cal E})$ is a Banach space with the norm $\|\cdot \|_{{\cal E}}$.  
We also note that, 
if $C_0^{{\rm lip}}(E)$ denotes the totality of Lipschitz continuous functions on $E$ with compact support, 
then $C_0^{{\rm lip}}(E)\subset {\cal D}({\cal E})$. 
Hence if ${\cal F}$ is the $\|\cdot\|_{{\cal E}}$-closure of $C_0^{{\rm lip}}(E)$, 
then $({\cal E},{\cal F})$ is a regular Dirichlet form on $L^2(E;m)$.

For some $o\in E$, let $d_0(x)=d(o,x) \ (x\in E)$. 
Let $C_b^{{\rm lip}}(E)$ be the totality of bounded Lipschitz continuous functions on $E$. 
To apply Theorem \ref{thm:ess-f} for $({\cal E},{\cal F})$, we prove
\begin{lem}\label{lem:bdd-lip}
\begin{enumerate}
\item[{\rm (i)}] The function $d_0$ belongs to ${\cal F}_{{\rm loc}}$.
\item[{\rm (ii)}] 
Suppose that $\mu_V(E)<\infty$. 
Then $C_b^{{\rm lip}}(E)\subset {\cal D}({\cal E})$,  and ${\cal F}$ coincides with the $\|\cdot\|_{{\cal E}}$-closure of $C_b^{{\rm lip}}(E)$.
In particular, $({\cal E},{\cal F})$ is recurrent. 
\end{enumerate}
\end{lem}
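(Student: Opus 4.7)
The plan is to handle (i) by truncation, and (ii) by verifying that bounded Lipschitz functions lie in ${\cal D}({\cal E})$ and can be approximated by compactly supported ones, from which recurrence follows immediately. Throughout I exploit the polynomial estimates \eqref{eq:ou-volume} and \eqref{eq:ou-jump}, together with $\mu_V(E)<\infty$ in (ii).

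For (i), fix a relatively compact open set $G\subset E$, pick $R>0$ with $G\subset K_o(R)$, and choose a Lipschitz cutoff $\chi:E\to[0,1]$ with compact support satisfying $\chi\equiv 1$ on $K_o(2R)$. Since $d_0$ is $1$-Lipschitz and continuous, the function $u_G:=(d_0\wedge 2R)\chi$ lies in $C_0^{{\rm lip}}(E)\subset {\cal F}$ and agrees with $d_0$ on $G$, giving $d_0\in {\cal F}_{{\rm loc}}$.

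For the inclusion $C_b^{{\rm lip}}(E)\subset {\cal D}({\cal E})$, any bounded Lipschitz $u$ lies in $L^2(E;\mu_V)$ by finiteness of $\mu_V$. Splitting the energy integral at $d(x,y)=1$, I would bound $(u(x)-u(y))^2$ by $L^2 d(x,y)^2$ (Lipschitz) on the short-range region and by $4\|u\|_\infty^2$ on the long-range region. Integration by parts against \eqref{eq:ou-volume}, as in \eqref{eq:ibp-1} and \eqref{eq:ibp-2}, yields uniform-in-$x$ bounds on $\int_{d(x,y)\le 1}d(x,y)^{2-\eta-\beta}\,m(\d y)$ (using $\beta<2$) and $\int_{d(x,y)>1}d(x,y)^{-\eta-\beta}\,m(\d y)$ (using $\beta>0$); integrating in $x$ against $\mu_V$ gives ${\cal E}(u,u)<\infty$.

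To identify ${\cal F}$ with the $\|\cdot\|_{{\cal E}}$-closure of $C_b^{{\rm lip}}(E)$, only the nontrivial inclusion needs work. Given $u\in C_b^{{\rm lip}}(E)$, I approximate by $u_n:=u\chi_n$, where $\{\chi_n\}\subset C_0^{{\rm lip}}(E)$ has $0\le \chi_n\le 1$, $\chi_n\equiv 1$ on $K_o(n)$, and Lipschitz constant at most $1$. Then $u_n\in C_0^{{\rm lip}}(E)$ and $u_n\to u$ in $L^2(E;\mu_V)$ by dominated convergence. Writing
\[
u_n(x)-u_n(y)-(u(x)-u(y))=(u(x)-u(y))(\chi_n(x)-1)+u(y)(\chi_n(x)-\chi_n(y)),
\]
both summands tend to $0$ pointwise, and the inequality $(a+b)^2\le 2a^2+2b^2$ supplies the dominating function $c\bigl[(u(x)-u(y))^2+\|u\|_\infty^2(d(x,y)^2\wedge 4)\bigr]J(x,y)$, whose integral against $m(\d y)\mu_V(\d x)$ is finite by the same small/large-jump split. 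Dominated convergence then gives ${\cal E}(u-u_n,u-u_n)\to 0$; verifying this integrable majorant is the main technical step. Finally, $1\in C_b^{{\rm lip}}(E)\subset {\cal F}$ with ${\cal E}(1,1)=0$, so $1\in {\cal F}_e$ and \cite[Theorem 1.6.3]{FOT11} yields recurrence of $({\cal E},{\cal F})$.
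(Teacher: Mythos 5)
Your proof is correct and follows essentially the same approach as the paper. For (i), you truncate $d_0$ to a compactly supported Lipschitz function (the paper uses a single tent function $w_n(d_0)$, you use $(d_0\wedge 2R)\chi$, which is the same idea). For (ii), you verify $C_b^{\rm lip}(E)\subset{\cal D}({\cal E})$ via the short-range/long-range split and the integration-by-parts estimates from \eqref{eq:ibp-1}--\eqref{eq:ibp-2}, and then show $C_b^{\rm lip}(E)\subset{\cal F}$ by multiplying by cutoffs $\chi_n$ and applying dominated convergence with the majorant $c\bigl[(u(x)-u(y))^2+\|u\|_\infty^2(d(x,y)^2\wedge 4)\bigr]J(x,y)$; the paper's expansion of $u\psi_n$ with $\psi_n=1-\varphi_n$ is algebraically identical to yours and uses the same dominating function. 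The closing recurrence argument via $1\in{\cal F}$ and \cite[Theorem 1.6.3]{FOT11} matches the paper as well.
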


\begin{proof}
We first prove (i). For $n\in {\mathbb N}$, we define 
\[
w_n(t)=
\begin{dcases}
t, & t\le n, \\
n+1-t, & n< t\le n+1, \\
0, & t>n+1.
\end{dcases}
\]
Then there exists a positive constant $L$ such that for any $n\in {\mathbb N}$, 
\[
|w_n(t)-w_n(s)|\le L |t-s|, \ s,t\in {\mathbb R}.
\]
Let $\varphi_n(x)=w_n(d_0(x)) \ (x\in E)$. Then for any $n\in {\mathbb N}$ and $x,y\in E$,
\begin{equation}\label{eq:ou-lip}
|\varphi_n(x) - \varphi_n(y)|=|w_n(d_0(x))-w_n(d_0(y))| 
\le L|d_0(x)-d_0(y)| \le L d(x,y),
\end{equation}
which yields $\varphi_n \in C_0^{{\rm lip}}(E)$. 
Since $d_0(x)=\varphi_n(x)$ for any $x\in E$ with $d_0(x)\le n$, 
we have (i).

We next prove the first assertion of (ii). Assume that $\mu_V(E)<\infty$. 
Then for $u\in C_b^{{\rm lip}}(E)$, 
\begin{equation*}
\begin{split}
&\iint_{E\times E\setminus {\rm diag}}(u(x)-u(y))^2 \, J(x,y) m({\rm d}y)\mu_V({\rm d}x)\\
&=\iint_{0<d(x,y)\le 1}(u(x)-u(y))^2 \, J(x,y) m({\rm d}y)\mu_V({\rm d}x)\\
&+\iint_{d(x,y)>1}(u(x)-u(y))^2 \, J(x,y) m({\rm d}y)\mu_V({\rm d}x)\\
&\le c_1 \left(\iint_{0<d(x,y)\le 1}d(x,y)^2 \, J(x,y) m({\rm d}y)\mu_V({\rm d}x)
+\iint_{d(x,y)>1} J(x,y) m({\rm d}y)\mu_V({\rm d}x)\right).
\end{split}
\end{equation*}
By \eqref{eq:ou-jump} and the same calculation as \eqref{eq:ibp-1} and \eqref{eq:ibp-2}, we have 
\begin{equation*}
\begin{split}
\iint_{0<d(x,y)\le 1}d(x,y)^2 \, J(x,y) m({\rm d}y)\mu_V({\rm d}x)
&\le C_3\int_E \left( \int_{0<d(x,y)\le 1} \frac{d(x,y)^2}{d(x,y)^{\eta+\beta}}\,m({\rm d}y)\right)\,\mu_V({\rm d}x)\\
&\le c_2
\end{split}
\end{equation*}
and 
\[
\iint_{d(x,y)>1} J(x,y) m({\rm d}y)\mu_V({\rm d}x)
\le C_3\int_E \left(\int_{d(x,y)>1} \frac{1}{d(x,y)^{\eta+\beta}}\,m({\rm d}y)\right)\,\mu_V({\rm d}x)\le c_3.
\]
We thus have $u\in  {\cal D}({\cal E})$ and so 
$C_b^{{\rm lip}}(E)\subset {\cal D}({\cal E})$.

We finally prove the second assertion of (ii). 
In order to do so, it is sufficient to show that $C_b^{{\rm lip}}(E)\subset {\cal F}$. 
Let $u\in C_b^{{\rm lip}}(E)$. 
Then $u\varphi_n \in C_0^{{\rm lip}}(E)$. 
In particular, if we let $\psi_n(x)=1-\varphi_n(x) \ (x\in E)$, then 
\begin{equation*}
\begin{split}
&{\cal E}_1(u-u\varphi_n, u-u\varphi_n)
={\cal E}_1(u\psi_n, u\psi_n)\\
&=\iint_{E\times E \setminus {\rm diag}}(u(x)\psi_n(x)-u(y)\psi_n(y))^2 \, J(x,y) m({\rm d}y)\mu_V({\rm d}x)
+\int_E u(x)^2\psi_n(x)^2\,\mu_V({\rm d}x).
\end{split}
\end{equation*}
Since \eqref{eq:ou-lip} holds and $u$ is bounded and Lipschitz continuous, 
there exists $M>0$ such that 
for any $n\in {\mathbb N}$ and $x,y\in E$, 
\begin{equation*}
\begin{split}
\left(u(x)\psi_n(x)-u(y)\psi_n(y)\right)^2
&=\left\{u(x)(\psi_n(x)-\psi_n(y))+\psi_n(y)(u(x)-u(y))\right\}^2\\
&\le M(1\wedge d(x,y)^2).
\end{split}
\end{equation*}
By noting that $\lim_{n\rightarrow\infty}\psi_n(x)=0$ for any $x\in E$ and 
\[
\iint_{E\times E \setminus {\rm diag}} (1\wedge d(x,y)^2)\,J(x,y)\,m({\rm d}y)\mu_V({\rm d}x)<\infty,
\]
the dominated convergent theorem yields 
\[
\lim_{n\rightarrow\infty}{\cal E}_1(u-u\varphi_n, u-u\varphi_n)=0
\]
and so $u \in {\cal F}$.
Hence the proof is complete. 
\end{proof}

Let us discuss the upper bound of $\lambda_e$ for $({\cal E},{\cal F})$ under the condition that $\mu_V(E)<\infty$. 
Note that $({\cal E},{\cal F})$ is recurrent by Lemma \ref{lem:bdd-lip}, 
and so Theorem \ref{thm:ess-f} is applicable to $({\cal E},{\cal F})$.  
Define 
\[
J(x,{\rm d}y)=\frac{1}{2}(1+e^{V(d_0(x))-V(d_0(y))})J(x,y)\,m({\rm d}y), \quad 
J({\rm d}x,{\rm d}y)=J(x,{\rm d}y)\mu_V({\rm d}x). 
\]
Then $J(A\times B)=J(B\times A)$ for any $A,B\in {\cal B}(E)$, and for any $u\in C_b^{{\rm lip}}(E)$,
\[
{\cal E}(u,u)
=\frac{1}{2}\iint_{E \times E}(u(x)-u(y))^2(1+e^{V(d_0(x))-V(d_0(y))})J(x,y)\,m({\rm d}y)\mu_V({\rm d}x).
\]

In what follows, we assume that for some positive constants $\delta$ and $C_2$, 
\begin{equation}\label{eq:ratio}
\frac{e^{V(r)}}{e^{V(s)}}\le C_2\left(\frac{r}{s}\right)^{\delta}, \quad 0<s<r<\infty.
\end{equation}
We take $\rho_r(x)=d_0(x)$ and $F_r(x,y)=r$ for $r>0$. We first give an upper bound of $M_1(r)$. 
By assumption, 
\begin{equation*}
\begin{split}
\int_{0<d(x,y) \le r} d(x,y)^2 J(x,{\rm d}y)
&\lesssim \int_{0<d(x,y) \le r}d(x,y)^{2-( \eta + \beta)}(1+e^{V(d_0(x))-V(d_0(y))})\,m({\rm d}y)\\
&\asymp r^{2-\beta} + \int_{0<d(x,y) \le r} d(x,y)^{2-( \eta + \beta)} e^{V(d_0(x))-V(d_0(y))}\,m({\rm d}y).
\end{split}
\end{equation*}
Since the function $V(r)$ is increasing, we have by \eqref{eq:ratio}, 
\begin{equation}\label{eq:ou-m1-1}
\begin{split}
&\int_{0<d(x,y)\le r, \, d_0(y)>d_0(x)/2} d(x,y)^{2-( \eta + \beta)} e^{V(d_0(x))-V(d_0(y))}\,m({\rm d}y)\\
&\le e^{V(d_0(x))-V(d_0(x)/2)} \int_{0<d(x,y) \le r, \, d_0(y)>d_0(x)/2} d(x,y)^{2-( \eta + \beta)} \,m({\rm d}y)\\
&\lesssim \int_{0<d(x,y) \le r} d(x,y)^{2-( \eta + \beta)} \,m({\rm d}y) 
\asymp  r^{2-\beta}.
\end{split}
\end{equation}
On the other hand, if $0<d(x,y)\le r$ and $d_0(y)\le d_0(x)/2$, then 
\[
r\ge d(x,y)\ge d_0(x)-d_0(y)\ge \frac{d_0(x)}{2}
\]
and so $d_0(x)\le 2r$. Therefore, by \eqref{eq:ratio},
\begin{equation*}
\begin{split}
&\int_{0<d(x,y)\le r, \, d_0(y) \le d_0(x)/2} d(x,y)^{2-( \eta + \beta)} e^{V(d_0(x))-V(d_0(y))}\,m({\rm d}y)\\
&\le e^{V(2r)}\int_{0<d(x,y) \le r}  d(x,y)^{2-( \eta + \beta)} \,m({\rm d}y)
\asymp r^{2-\beta}e^{V(r)}.
\end{split}
\end{equation*}
Combining this with \eqref{eq:ou-m1-1}, we get 
\begin{equation}\label{eq:ou-m1}
M_1(r)\lesssim r^{2-\beta}e^{V(r)}.
\end{equation}

We next give an upper bound of $M_2(r)$. 
By assumption, 
\begin{equation*}
\begin{split}
\int_{d(x,y)> r} J(x,{\rm d}y)
&\lesssim \int_{d(x,y) > r}\frac{1}{d(x,y)^{\eta + \beta}}(1+e^{V(d_0(x))-V(d_0(y))})\,m({\rm d}y)\\
&\asymp r^{-\beta} + \int_{d(x,y) > r} \frac{1}{d(x,y)^{\eta + \beta}} e^{V(d_0(x))-V(d_0(y))}\,m({\rm d}y).
\end{split}
\end{equation*}
Since
\begin{equation}\label{eq:bound-v}
\int_{d(x,y) > r}\frac{1}{d(x,y)^{\eta + \beta}}e^{-V(d_0(y))}\,m({\rm d}y)
\lesssim 
r^{-(\eta + \beta)},
\end{equation}
there exists $c_1>0$ such that for any $x\in E$ with $d_0(x)\le r$,
\begin{equation*}
\begin{split}
\int_{d(x,y) >r, \, d_0(y) \le d_0(x)/2} \frac{1}{d(x,y)^{ \eta + \beta }} e^{V(d_0(x))-V(d_0(y))}\,m({\rm d}y) 
&\le e^{V(r)}\int_{d(x,y) > r }\frac{1}{d(x,y)^{\eta + \beta}}e^{-V(d_0(y))}\,m({\rm d}y)\\
&\le c_1 
r^{-(\eta + \beta)}e^{V(r)}.
\end{split}
\end{equation*}
We note that, if $d_0(y) \le d_0(x)/2$, 
then $d(x,y)\ge d_0(x)/2$ by the triangle inequality.
Hence by \eqref{eq:bound-v}, there exists $c_2>0$ such that 
for any $x\in E$ with $d_0(x)>r$, 
\begin{equation*}
\begin{split}
&\int_{d(x,y) >r, \, d_0(y) \le d_0(x)/2} \frac{1}{d(x,y)^{ \eta + \beta }} e^{V(d_0(x))-V(d_0(y))}\,m({\rm d}y) \\  
&\le e^{V(d_0(x))}\int_{d(x,y) \ge d_0(x)/2}\frac{1}{d(x,y)^{\eta + \beta}}e^{-V(d_0(y))}\,m({\rm d}y) 
\le c_2
d_0(x)^{-(\eta + \beta)}e^{V(d_0(x))}.
\end{split}
\end{equation*}
By \eqref{eq:ratio}, we also have 
\begin{equation*}
\begin{split}
\int_{d(x,y) > r, \, d_0(y) > d_0(x)/2} \frac{1}{d(x,y)^{ \eta + \beta }} e^{V(d_0(x))-V(d_0(y))}\,m({\rm d}y) 
&\le \frac{e^{V(d_0(x))}}{e^{V(d_0(x)/2)}}\int_{d(x,y) > r}\frac{1}{d(x,y)^{\eta + \beta}}\,m({\rm d}y)\\
&\lesssim r^{-\beta}.
\end{split}
\end{equation*}
Therefore, the argument above yields for $r\ge 1$,
\begin{equation}\label{eq:ou-m2}
M_2(r)\lesssim
r^{-\beta}+\sup_{s\ge r} (s^{-(\eta + \beta)}e^{V(s)}).
\end{equation}

By Theorem \ref{thm:ess-f} together with \eqref{eq:ou-m1} and \eqref{eq:ou-m2},  
we have 
\[
\lambda_e \lesssim \limsup_{r\rightarrow\infty} \left( \nu_r r^{2-\beta} e^{V(r)}+r^{-(\eta + \beta)} e^{V(r)} \right). 
\]
We assume in addition that $\limsup_{r\rightarrow\infty}r^{-(\eta + \beta)}e^{V(r)}<\infty$ and $C_2<C_1(\eta+\beta)\beta^{-1}$. 
Then for all sufficiently large $R>1$, 
\[
\mu_V(K_o(R)^c)=\int_{K_o(R)^c}e^{-V(d_0(x))}\,m({\rm d}x) 
\gtrsim \int_{K_o(R)^c}d_0(x)^{-(\eta + \beta)}\,m({\rm d}x)\asymp R^{-\beta}.
\]
This yields $\nu_r=0$ and so 
\begin{equation}\label{eq:ou-upper}
\lambda_e \lesssim \limsup_{r\rightarrow\infty} \left(r^{-(\eta + \beta)} e^{V(r)} \right). 
\end{equation}
In particular, if $\lim_{r\rightarrow\infty}r^{-(\eta + \beta)}e^{V(r)}=0$, 
then $\lambda_e=0$.

\begin{rem}\label{rem:ou-type} \rm 
Assume that $E={\mathbb R}^d$. 
If $m$ is the $d$-dimensional Lebesgue measure, 
and if  $J(x,y)\asymp |x-y|^{-(d+\alpha)} \ ((x,y)\in {\mathbb R}^d\times {\mathbb R}^d \setminus {\rm diag})$ for some $\alpha\in (0,2)$, 
then \eqref{eq:ou-volume} and \eqref{eq:ou-jump} are satisfied with $\eta=d$ and $\beta=\alpha$.  
Under this setting, we see by \cite[Corollaries 1.2 and 1.3]{WW15} 
that if $\limsup_{r\rightarrow\infty}r^{-(d + \beta)}e^{V(r)}=\infty$, 
then $\lambda_e=\infty$.
Hence \eqref{eq:ou-upper} provides an effective upper bound 
for the bottom of the essential spectrum for $({\cal E},{\cal F})$.
\end{rem}

\noindent
{\bf Acknowledgements.} 
The author would like to thank Professor Masayoshi Takeda and Professor Toshihiro Uemura 
for their valuable discussion on the topic in this paper. 
He is grateful to Professor Tao Wang for sending him the reference \cite{HW25}.

\end{document}